\documentclass[11pt,a4paper]{article}

\usepackage{epsf,epsfig,amsfonts,amsgen,amsmath,amstext,amsbsy,amsopn,amsthm,cases,listings,color
}
\usepackage{ebezier,eepic}
\usepackage{color}
\usepackage{multirow}
\usepackage{epstopdf}
\usepackage{graphicx}
\usepackage{pgf,tikz}
\usepackage{mathrsfs}
\usepackage[marginal]{footmisc}
\usepackage{enumitem}
\usepackage[titletoc]{appendix}
\usepackage{booktabs}
\usepackage{url}
\usepackage{mathtools}

\usepackage{pgfplots}
\usepackage{authblk}
\usepackage{amssymb}
\usepackage{float}
\usepackage{subfigure}
\pgfplotsset{compat=1.18}
\usepackage{mathrsfs}
\usetikzlibrary{arrows}

 \usepackage[backref=page]{hyperref}

\usepackage{algorithm}
\usepackage{algpseudocode}

\usepackage{booktabs,tabularx,ragged2e}
\newcommand{\problemStatement}[3]{%
  \begin{center}
  \begin{tabularx}{\columnwidth}{@{}lX@{}}
  \toprule
  \multicolumn{2}{@{}l@{}}{\textsc{#1}}\tabularnewline
  \midrule
  \bfseries Input:    & #2 \\
  \bfseries Question: & #3 \\
  \bottomrule
  \end{tabularx}
  \end{center}
}

\allowdisplaybreaks[1]

\definecolor{uuuuuu}{rgb}{0.27,0.27,0.27}
\definecolor{sqsqsq}{rgb}{0.1255,0.1255,0.1255}

\setlength{\textwidth}{150mm} \setlength{\oddsidemargin}{7mm}
\setlength{\evensidemargin}{7mm} \setlength{\topmargin}{-5mm}
\setlength{\textheight}{245mm} \topmargin -18mm

\newtheorem{definition}{Definition} [section]
\newtheorem{theorem}[definition]{Theorem}

\newtheorem{claim}[definition]{Claim}

\setlength{\parindent}{0pt}
\parskip=8pt

\begin{document}
\title{\bf\Large A criterion for Andr\'{a}sfai--Erd\H{o}s--S\'{o}s type theorems and applications}

\date{\today}
\author[1]{Jianfeng Hou\thanks{Research was supported by National Key R\&D Program of China (Grant No. 2023YFA1010202), National Natural Science Foundation of China (Grant No. 12071077), the Central Guidance on Local Science and Technology Development Fund of Fujian Province (Grant No. 2023L3003). Email: \texttt{jfhou@fzu.edu.cn}}}
\author[2]{Xizhi Liu\thanks{Research was supported by ERC Advanced Grant 101020255. Email: \texttt{xizhi.liu.ac@gmail.com}}}
\author[1]{Hongbin Zhao\thanks{Email: \texttt{hbzhao2024@163.com}}}
\affil[1]{Center for Discrete Mathematics,
            Fuzhou University, Fujian, 350003, China}
\affil[2]{Mathematics Institute and DIMAP,
            University of Warwick,
            Coventry, CV4 7AL, UK}
\maketitle
\begin{abstract}
    The classical Andr\'{a}sfai--Erd\H{o}s--S\'{o}s Theorem states that for $\ell\ge 2$, every $n$-vertex $K_{\ell+1}$-free graph with minimum degree greater than $\frac{3\ell-4}{3\ell-1}n$ must be $\ell$-partite. 
    We establish a simple criterion for $r$-graphs, $r \geq 2$, to exhibit an Andr\'{a}sfai--Erd\H{o}s--S\'{o}s type property, also known as degree-stability. This leads to a classification of most previously studied hypergraph families with this property
    An immediate application of this result, combined with a general theorem by Keevash--Lenz--Mubayi, solves the spectral Tur\'{a}n problems for a large class of hypergraphs.

    We show an interesting application of the degree-stability in Complexity Theory: For every $r$-graph $F$ with degree-stability, there is a simple algorithm to decide the $F$-freeness of an $n$-vertex $r$-graph with minimum degree greater than $(\pi(F) - \varepsilon_F)\binom{n}{r-1}$ in time $O(n^r)$, where $\varepsilon_F >0$ is a constant. 
    In particular, for the complete graph $K_{\ell+1}$, we can take $\varepsilon_{K_{\ell+1}} = (3\ell^2-\ell)^{-1}$, and this bound is tight up to some multiplicative constant factor unless $\mathbf{W[1]} = \mathbf{FPT}$. 
    Based on a result by Chen--Huang--Kanj--Xia, we further show that for every fixed $C > 0$, this problem cannot be solved in time $n^{o(\ell)}$ if we replace $\varepsilon_{K_{\ell+1}}$ with $(C\ell)^{-1}$ unless $\mathbf{ETH}$ fails.
    Furthermore, we apply the degree-stability of $K_{\ell+1}$ to decide the $K_{\ell+1}$-freeness of graphs whose size is close to the Tur\'{a}n bound in time $(\ell+1)n^2$, partially improving a recent result by Fomin--Golovach--Sagunov--Simonov.
    
    As an intermediate step, we show that for a specific class of $r$-graphs $F$, the (surjective) $F$-coloring problem can be solved in time $O(n^r)$, provided the input $r$-graph has $n$ vertices and a large minimum degree, refining several previous results. 

\medskip

    \textbf{Keywords:} Andr\'{a}sfai--Erd{\H o}s--S\'{o}s Theorem, testing $F$-freeness, homomorphism, parameterized algorithms, Exponential Time Hypothesis ($\mathbf{ETH}$).

\end{abstract}
\section{Introduction}
Fix an integer $r\ge 2$, an $r$-graph $\mathcal{H}$ is a collection of $r$-subsets of some finite set $V$. We identify a hypergraph\footnote{A graph is viewed as a $2$-uniform hypergraph.} $\mathcal{H}$ with its edge set and use $V(\mathcal{H})$ to denote its vertex set. The size of $V(\mathcal{H})$ is denoted by $v(\mathcal{H})$. 
For a vertex $v\in V(\mathcal{H})$, 
the \textbf{degree} $d_{\mathcal{H}}(v)$ of $v$ in $\mathcal{H}$ is the number of edges in $\mathcal{H}$ containing $v$.
We use $\delta(\mathcal{H})$, $\Delta(\mathcal{H})$, and $d(\mathcal{H})$ to denote the \textbf{minimum degree}, the \textbf{maximum degree}, and the \textbf{average degree} of $\mathcal{H}$, respectively.
We will omit the subscript $\mathcal{H}$ if it is clear from the context.

Given a family $\mathcal{F}$ of $r$-graphs, we say $\mathcal{H}$ is \textbf{$\mathcal{F}$-free}
if it does not contain any member of $\mathcal{F}$ as a subgraph.
The \textbf{Tur\'{a}n number} $\mathrm{ex}(n,\mathcal{F})$ of $\mathcal{F}$ is the maximum
number of edges in an $\mathcal{F}$-free $r$-graph on $n$ vertices.
The \textbf{Tur\'{a}n density} of $\mathcal{F}$ is defined as $\pi(\mathcal{F}) \coloneqq \lim_{n\to \infty}\mathrm{ex}(n,\mathcal{F})/\binom{n}{r}$,
the existence of the limit follows from a simple averaging argument of Katona--Nemetz--Simonovits~\cite{KNS64}. 
We say a family $\mathcal{F}$ is \textbf{nondegenerate} if $\pi(\mathcal{F}) > 0$.
The study of $\mathrm{ex}(n,\mathcal{F})$ has been a central topic in extremal graph and hypergraph theory since the seminal work of Tur\'{a}n~\cite{T41} who proved that $\mathrm{ex}(n,K_{\ell+1}) = \left\lfloor \frac{\ell-1}{2\ell} n^2 \right \rfloor$ for all $n \ge \ell \ge 2$ (with the case $\ell=2$ proved even earlier by Mantel~\cite{Mantel07}). 
We refer the reader to surveys~\cite{Fur91,Sid95,Keevash11} for related results. 

\subsection{Degree-stability}\label{SUBSEC:degree-stable}
In this subsection, we focus on the structure of dense $F$-free hypergraphs. 
Here, dense can refer to either a large number of edges or a large minimum degree. 
This important topic in Extremal Combinatorics traces its origins to the seminal work of Simonovits~\cite{SI68}, who proved that for $\ell\ge 2$, every $K_{\ell+1}$-free $n$-vertex graph with at least $\frac{\ell-1}{2\ell} n^2 - o(n^2)$ edges can be made $\ell$-partite by removing $o(n^2)$ edges (see~\cite{Fur15} for a concise proof). 
The classical Andr\'{a}sfai--Erd\H{o}s--S\'{o}s Theorem~\cite{AES74} demonstrates an even stronger stability for $K_{\ell+1}$-free graphs. 
It states that for $\ell\ge 2$, every $n$-vertex $K_{\ell+1}$-free graph with minimum degree greater than $\frac{3\ell-4}{3\ell-1}n$ must be $\ell$-partite. 

Let $r\ge 2$ be an integer, $\mathcal{F}$ be a nondegenerate family of $r$-graphs, and $\mathfrak{H}$ be a family of $\mathcal{F}$-free $r$-graphs. We say 
\begin{itemize}
    \item $\mathcal{F}$ is \textbf{edge-stable} with respect to $\mathfrak{H}$ if for every $\delta>0$ there exist $\varepsilon>0$ and $n_0$ such that every $\mathcal{F}$-free $r$-graph $\mathcal{H}$ on $n \ge n_0$ vertices with $|\mathcal{H}| \ge \left(\pi(F)/r! - \varepsilon\right)n^r$ becomes a member in $\mathfrak{H}$ after removing at most $\delta n^r$ edges, 
    \item $\mathcal{F}$ is \textbf{degree-stable} with respect to $\mathfrak{H}$ if there exist $\varepsilon>0$ and $n_0$ such that every $\mathcal{F}$-free $r$-graph $\mathcal{H}$ on $n \ge n_0$ vertices with $\delta(\mathcal{H}) \ge \left(\pi(F)/(r-1)! - \varepsilon\right)n^{r-1}$ is a member in $\mathfrak{H}$, 
    \item $\mathcal{F}$ is \textbf{vertex-extendable} with respect to $\mathfrak{H}$ if there exist $\varepsilon>0$ and $n_0$ such that for every $\mathcal{F}$-free $r$-graph $\mathcal{H}$ on $n \ge n_0$ vertices with $\delta(\mathcal{H}) \ge \left(\pi(F)/(r-1)! - \varepsilon\right)n^{r-1}$ the following holds: 
        if $\mathcal{H}-v$ is a member in $\mathfrak{H}$, then $\mathcal{H}$ is a member in $\mathfrak{H}$ as well. 
\end{itemize}

Vertex-extendability was introduced in~\cite{LMR23unif} to provide a unified framework for proving the degree-stability of certain classes of nondegenerate families of hypergraphs. However, one limitation of the main results in~\cite{LMR23unif} is that they only apply to families $\mathcal{F}$ that are either blowup-invariant (see~{\cite[Theorem 1.7]{LMR23unif}})
or have a strong stability called vertex-stability (see~{\cite[Theorem 1.8]{LMR23unif}}).
In the following theorem, we extend the results of~\cite{LMR23unif} to include the broader class of families with edge-stability, a property satisfied by almost all nondegenerate hypergraph families (with known Tur\'{a}n densities).
\begin{theorem}\label{THM:edge-stable-to-degree-stable}
    Let $\mathcal{F}$ be a nondegenerate family of $r$-graph and $\mathfrak{H}$ be a hereditary\footnote{A family $\mathfrak{H}$ is hereditary if all subgraphs of every member $H\in \mathfrak{H}$ are also contained in $\mathfrak{H}$.} class of $\mathcal{F}$-free $r$-graphs. 
    If $\mathcal{F}$ is both edge-stable and vertex-extendable with respect to $\mathfrak{H}$, then $\mathcal{F}$ is degree-stable with respect to $\mathfrak{H}$. 
\end{theorem}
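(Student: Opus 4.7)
The plan is to combine edge-stability with a pruning-and-reinsertion step driven by vertex-extendability. Set $p = \pi(\mathcal{F})/(r-1)!$. I would first fix constants: let $\varepsilon_{\mathrm{ve}}$ come from vertex-extendability, choose an auxiliary threshold $\gamma$ of order $\varepsilon_{\mathrm{ve}}$ and a quantity $\delta$ of order $\varepsilon_{\mathrm{ve}}^{2}$, feed $\delta$ into edge-stability to produce $\varepsilon_{\mathrm{es}}$, and then take $\varepsilon = \min\{\varepsilon_{\mathrm{ve}}, \varepsilon_{\mathrm{es}}\}/(4r)$ together with a sufficiently large $n_{0}$. Given an $\mathcal{F}$-free $\mathcal{H}$ on $n \ge n_{0}$ vertices with $\delta(\mathcal{H}) \ge (p - \varepsilon)n^{r-1}$, averaging the degrees gives $|\mathcal{H}| \ge (\pi(\mathcal{F})/r! - \varepsilon_{\mathrm{es}})n^{r}$, so edge-stability produces a subgraph $\mathcal{H}^{\ast} \subseteq \mathcal{H}$ with $\mathcal{H}^{\ast} \in \mathfrak{H}$ and $|\mathcal{R}| \le \delta n^{r}$, where $\mathcal{R} \coloneqq \mathcal{H}\setminus \mathcal{H}^{\ast}$.

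Next I would single out the bad set $B \coloneqq \{v : d_{\mathcal{R}}(v) > \gamma n^{r-1}\}$. Standard averaging gives $|B| \le r\delta n/\gamma$, a small fraction of $n$. Every $u \notin B$ satisfies $d_{\mathcal{H}^{\ast}}(u) \ge (p - \varepsilon - \gamma)n^{r-1}$, and a routine computation --- the step that pins down the quadratic dependence of $\delta$ on $\varepsilon_{\mathrm{ve}}$ --- yields $\delta(\mathcal{H}^{\ast} - B) \ge (p - \varepsilon_{\mathrm{ve}})(n - |B|)^{r-1}$. Since $\mathfrak{H}$ is hereditary, $\mathcal{H}^{\ast} - B \in \mathfrak{H}$.

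The heart of the argument is to upgrade $\mathcal{H}^{\ast} - B$ to the whole induced subgraph on $V(\mathcal{H})\setminus B$. Let $\widetilde{\mathcal{H}}$ be a maximal member of $\mathfrak{H}$ with $\mathcal{H}^{\ast} - B \subseteq \widetilde{\mathcal{H}} \subseteq \mathcal{H}[V(\mathcal{H})\setminus B]$; it inherits the min-degree lower bound. If some edge $e \in \mathcal{H}[V(\mathcal{H})\setminus B] \setminus \widetilde{\mathcal{H}}$ existed, then $\widetilde{\mathcal{H}} \cup \{e\}$ would be $\mathcal{F}$-free with the same min-degree lower bound, and for any $v \in e$ we would have $(\widetilde{\mathcal{H}} \cup \{e\}) - v = \widetilde{\mathcal{H}} - v \in \mathfrak{H}$ by heredity; vertex-extendability would then force $\widetilde{\mathcal{H}} \cup \{e\} \in \mathfrak{H}$, contradicting the maximality of $\widetilde{\mathcal{H}}$. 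Hence $\mathcal{H}[V(\mathcal{H})\setminus B] = \widetilde{\mathcal{H}} \in \mathfrak{H}$. Finally I would enumerate $B = \{b_{1}, \ldots, b_{m}\}$ and prove $\mathcal{H}[(V(\mathcal{H})\setminus B) \cup \{b_{1}, \ldots, b_{i}\}] \in \mathfrak{H}$ by induction on $i$, the base case being the previous line and the inductive step following from the same quantitative min-degree check plus vertex-extendability applied to the new vertex $b_{i+1}$; taking $i = m$ gives $\mathcal{H} \in \mathfrak{H}$.

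The main obstacle is the quantitative balancing of $\varepsilon, \gamma, \delta, |B|$. Whenever $\pi(\mathcal{F}) < 1$, removing a vertex can cost a remaining vertex up to $n^{r-2}/(r-2)!$ in degree while the target degree drops by only about $(r-1)p \cdot n^{r-2}$; a deficit proportional to $(1 - \pi(\mathcal{F}))(|B|/n)$ therefore accumulates and must be absorbed by the slack $\varepsilon_{\mathrm{ve}} - \varepsilon$, which is precisely why $\delta$ must be taken quadratically small in $\varepsilon_{\mathrm{ve}}$. This deficit also explains why a naive one-vertex induction on $n$, without first using edge-stability to confine the extra edges to a vertex set of controlled size, cannot succeed.
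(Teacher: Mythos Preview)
Your proposal is correct and follows essentially the same architecture as the paper's proof: use edge-stability to obtain a subgraph in $\mathfrak{H}$, prune a small set of bad vertices (the paper's $Z$, your $B$) so that the restricted subgraph has large minimum degree and lies in $\mathfrak{H}$ by heredity, and then repeatedly invoke vertex-extendability to reinsert the missing edges and the bad vertices. The only difference is packaging: where you pass from $\mathcal{H}^{\ast}-B$ to $\mathcal{H}[V\setminus B]$ via a maximal-element-plus-one-edge argument, the paper instead runs a single vertex-by-vertex induction over all $n$ vertices (good vertices first, then bad), but the underlying mechanism---heredity to get $\mathcal{G}-v\in\mathfrak{H}$, then vertex-extendability to recover $\mathcal{G}\in\mathfrak{H}$---is identical.
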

Recall that an \textbf{$r$-multiset} is an unordered collection of $r$ elements with repetitions allowed.
The \textbf{multiplicity $e(i)$} of $i$ in a multiset $e$ is the number of times that $i$ appears.
An \textbf{$r$-pattern} is a pair $P=(\ell,E)$ where $\ell$ is a positive integer and 
$E$ is a collection of $r$-multisets on $[\ell]$. 
It is clear that pattern is a generalization of $r$-graph, 
since an $r$-graph is a pattern in which $E$ consists of only simple $r$-sets. 
For convenience, we call $E$ the edge set of $P$ and omit the first coordinate $\ell$ if it is clear from the context. 
An $r$-graph $\mathcal{H}$ is \textbf{$P$-colorable} if there exists a \textbf{homomorphism} $\phi$ from $\mathcal{H}$ to $P$, where homomorphism means $\phi(e) \in P$ for every $e\in \mathcal{H}$. 

For many $r$-graph families $\mathcal{F}$, extremal $\mathcal{F}$-free constructions are typically $P$-colorable for some specific pattern $P$. 
We refer to such a pair $(\mathcal{F}, P)$ as a \textbf{Tur\'{a}n pair}. 
More specifically, given a pair $(F,P)$, where $P$ is an $r$-uniform pattern and $F$ is a family of $r$-graphs, we say $(F,P)$ is a Tur\'{a}n pair if every $P$-colorable $r$-graph is $F$-free and every $n$-vertex $F$-free $r$-graph with $\mathrm{ex}(n,F)-o(n^r)$ edges is $P$-colorable after removing at most $o(n^r)$ edges.
In most cases, applying Theorem~\ref{THM:edge-stable-to-degree-stable} involves choosing $\mathfrak{H}$ as the collection of all $P$-colorable hypergraphs. 
Hence, when stating that $\mathcal{F}$ is edge-stable/degree-stable/vertex-extendable, it is implied that this property is with respect to the family of $P$-colorable hypergraphs for simplicity.

\begin{table}[H]
    \centering
    \begin{tabular}{c|c}
        \hline
        Hypergraph & Degree-stable?\\
        \hline
        Edge-critical graphs~\cite{AES74,ES73} & Yes \\
        Non-edge-critical graphs~\cite{LMR23unif} & No\\
        Expansion of edge-critical graphs~\cite{MU06,PI13,LIU19,LMR23unif} & Yes \\
        Expansion of non-edge-critical graphs & No \\
        Expansion of extended Erd\H{o}s--S\'{o}s tree~\cite{Sido89,NY18,BIJ17} & Yes \\
        Expansion of $M_{2}^r$ for $r \ge 3$~\cite{HK13,BNY19,LMR23unif} & Yes \\
        Expansion of $M_{k}^3$, $L_{k}^3$, or $L_{k}^4$ for $k \ge 2$~\cite{HK13,JPW18,LMR23unif} & Yes\\
        Expansion of $M_{k}^{4}$ for $k \ge 2$~\cite{YP23} & Yes \\
        Expansion of $K_{4}^{3} \sqcup K_{3}^{3}$~\cite{YP22} & Yes \\
        Generalized triangle $\mathbb{T}_r$ for $r\in \{3,4\}$~\cite{BO74,FF83,KM04,LIU19,LMR23unif,Sido87,PI08} & Yes \\
        Generalized triangle $\mathbb{T}_r$ for $r\in \{5,6\}$~\cite{FF89,PI08} & No\\
        Expanded triangle $\mathcal{C}_{3}^{2r}$~\cite{Frankl90,KS05a} & No\\
        Fano Plane~\cite{DF00,KS05,FS05} & Yes \\
        $\mathbb{F}_{3,2}$ ($3$-book with $3$ pages)~\cite{FPS053Book3page} & No \\
        $F_7$ ($4$-book with $3$ pages)~\cite{FPS06Book} & Yes\\
        $\mathbb{F}_{4,3}$ ($4$-book with $4$ pages)~\cite{FMP084Book4page} & No \\
        \hline
    \end{tabular}
    \caption{List of hypergraphs with or without degree-stability.}
    \label{tab:degree-stable.}
\end{table}

In Table~\ref{tab:degree-stable.}, we summarize (most of) the previously studied hypergraph families (their definitions are included in the Appendix.) with degree-stability. Since they are all edge-stable, according to Theorem~\ref{THM:edge-stable-to-degree-stable}, proving degree-stability is reduced to verifying their vertex-extendability. 
This verification is relatively straightforward, and we refer the reader to~\cite{LMR23unif,HLLYZ23} for systematic results on this property.

Proof for Theorem~\ref{THM:edge-stable-to-degree-stable} is presented in Section~\ref{SEC:proof-degree-stable}.

\subsection{Spectral Tur\'{a}n problems}\label{SUBSEC:Intro-spectral}
In this subsection, we show a quick application of Theorem~\ref{THM:edge-stable-to-degree-stable} in spectral Tur\'{a}n problems. 

Given an $r$-graph $\mathcal{H}$ on $[n]$, the \textbf{Lagrange polynomial} of $\mathcal{H}$ is defined as 
\begin{align*}
    \Lambda_{\mathcal{H}}(X_1, \ldots, X_{n})
    \coloneqq 
    \sum_{E\in \mathcal{H}} \prod_{i\in E} X_i. 
\end{align*}
For every real number $\alpha \ge 1$, the \textbf{$\alpha$-spectral radius} of $\mathcal{H}$ is 
\begin{align*}
    \lambda_{\mathcal{H},\alpha}
    \coloneqq \max\left\{\Lambda_{\mathcal{H}}(x_1, \ldots, x_{n}) \colon (x_1, \ldots, x_{n}) \in \mathbb{S}^{n-1}_{\alpha}\right\}, 
\end{align*}
where $\mathbb{S}^{n-1}_{\alpha} \coloneqq \left\{(x_1, \ldots, x_{n}) \in \mathbb{R}^{n} \colon x_1^{\alpha} + \cdots + x_{n}^{\alpha} = 1\right\}$. 
%
The spectral Tur\'{a}n problem studies, for fixed family $\mathcal{F}$ of $r$-graphs, the value 
\begin{align*}
    \mathrm{specex}_{\alpha}(n,\mathcal{F})
    \coloneqq \max\left\{\lambda_{\mathcal{G},\alpha} \colon \text{$\mathcal{G}$ is an $n$-vertex $\mathcal{F}$-free $r$-graph}\right\}. 
\end{align*}
This problem was the focus of long-term research by Nikiforov (see the survey~\cite{Niki14}) and is relatively well-understood for graphs, with the general theorem by Wang--Kang--Xue~\cite{WKX23} being a notable highlight. 
However, similar to the ordinary Tur\'{a}n problem, understanding of the hypergraph spectral Tur\'{a}n problem remains relatively limited, with only a few known examples. 
By combining Theorem~\ref{THM:edge-stable-to-degree-stable} with a general theorem of Keevash--Lenz--Mubayi~\cite{KLM14}, we derive the following theorem, significantly enhancing our understanding of the hypergraph spectral Tur\'{a}n problem. 

\begin{theorem}\label{THM:spectral-Turan}
    Suppose that $F$ is an $r$-graph in Table~\ref{tab:degree-stable.} with degree-stability. 
    Then for sufficiently large $n$, every $F$-free $r$-graph $\mathcal{H}$ on $n$ vertices satisfies 
    \begin{align*}
        \lambda_{\mathcal{H}, \alpha}
        \le \max\left\{\lambda_{\mathcal{G}, \alpha} \colon \text{$\mathcal{G}$ is $P$-colorable and $v(\mathcal{H}) = n$}\right\}, 
    \end{align*}
    where $P$ is the pattern such that $(F,P)$ is a Tur\'{a}n pair. 
    Moreover, equality holds only if $\mathcal{H}$ is $P$-colorable. 
\end{theorem}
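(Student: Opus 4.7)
The plan is to combine the degree-stability guaranteed by Theorem~\ref{THM:edge-stable-to-degree-stable} with the spectral-to-extremal reduction of Keevash--Lenz--Mubayi~\cite{KLM14}. Let $\mathfrak{H}$ denote the hereditary class of all $P$-colorable $r$-graphs. For every $F$ in Table~\ref{tab:degree-stable.} marked as degree-stable, edge-stability and vertex-extendability with respect to $\mathfrak{H}$ are established in the references cited there, so Theorem~\ref{THM:edge-stable-to-degree-stable} furnishes constants $\varepsilon_F, n_0 > 0$ such that every $F$-free $r$-graph on $n \geq n_0$ vertices with $\delta \geq (\pi(F)/(r-1)! - \varepsilon_F)\,n^{r-1}$ is $P$-colorable.

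Fix $n \geq n_0$ and let $\mathcal{H}$ be an $n$-vertex $F$-free $r$-graph maximizing $\lambda_{\mathcal{H},\alpha}$, with optimizer $x \in \mathbb{S}^{n-1}_{\alpha}$. The first step is a simple spectral lower bound: a balanced blowup $\mathcal{G}^*$ of $P$ on $n$ vertices lies in $\mathfrak{H}$, has $|\mathcal{G}^*| = (\pi(F)/r! - o(1))\,n^{r}$ edges, and evaluating $\Lambda_{\mathcal{G}^*}$ at the uniform vector $n^{-1/\alpha}\mathbf{1}$ gives
\[
    \lambda_{\mathcal{H},\alpha} \;\geq\; \lambda_{\mathcal{G}^*,\alpha} \;\geq\; \bigl(\pi(F)/r! - o(1)\bigr)\,n^{\,r - r/\alpha}.
\]
The second step, which is the heart of the argument and adapts the vertex-replacement technique of~\cite{KLM14}, is to convert this spectral lower bound into a combinatorial minimum-degree bound on $\mathcal{H}$: if some vertex $v \in V(\mathcal{H})$ had degree smaller than $(\pi(F)/(r-1)! - \varepsilon_F)\,n^{r-1}$, then picking a vertex $u$ whose link carries larger Lagrangian mass and replacing the link of $v$ by a copy of the link of $u$ would yield an $F$-free $r$-graph with strictly larger Lagrange value at a suitable rescaling of $x$, contradicting the maximality of $\mathcal{H}$.

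With the minimum-degree bound in hand, Theorem~\ref{THM:edge-stable-to-degree-stable} forces $\mathcal{H} \in \mathfrak{H}$, which proves the stated inequality; the ``only if'' clause follows by applying the same argument to any $F$-free $r$-graph that attains the stated maximum. I expect the main technical obstacle to be the vertex-replacement step, where one must simultaneously guarantee a strict Lagrangian gain and preserve $F$-freeness under the local modification. For each family $F$ in Table~\ref{tab:degree-stable.} the latter is ensured by a standard blowup/symmetrization property of the pair $(F,P)$ embedded in the KLM framework; the former is a short convexity calculation using that, at the optimizer $x$, the vertex $v$ contributes strictly less to $\Lambda_{\mathcal{H}}(x)$ than $u$ does.
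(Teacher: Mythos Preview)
The paper's proof is much shorter than your sketch: it quotes Theorem~\ref{THM:KLM14} as a black box and checks its three hypotheses. Hypothesis~\ref{THM:KLM14-1} is exactly the degree-stability supplied by Theorem~\ref{THM:edge-stable-to-degree-stable}; hypothesis~\ref{THM:KLM14-2} (smoothness of $n\mapsto\mathrm{ex}(n,F)$) is taken from~\cite[Theorem~1.9]{HLLYZ23}; hypothesis~\ref{THM:KLM14-3} (the asymptotic $\lambda_{\alpha}(\mathfrak{H},n)=r!\,\mathrm{ex}(n,F)\,n^{-r/\alpha}+O(n^{r-r/\alpha-1})$) is handled via~\cite{KNY15} or the calculation behind~\cite[Corollary~1.6]{KLM14}. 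Your lower bound on $\lambda_{\mathcal{H},\alpha}$ is one direction of hypothesis~\ref{THM:KLM14-3}, but you never address the matching upper bound or hypothesis~\ref{THM:KLM14-2} at all.

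More importantly, your vertex-replacement step has two genuine gaps. First, copying the link of $u$ onto $v$ is Zykov-type symmetrization; the resulting $r$-graph sits inside a blowup of $\mathcal{H}-v$, so it is $F$-free only when $F$-freeness is closed under blowups. That is precisely the blowup-invariance hypothesis the paper is \emph{trying to avoid}: Theorem~\ref{THM:edge-stable-to-degree-stable} was formulated to extend~\cite{LMR23unif} beyond the blowup-invariant case, and already among edge-critical graphs in Table~\ref{tab:degree-stable.} this fails (e.g.\ $K_3$ is $C_5$-free but $K_3[2]\supset C_5$). There is no ``standard blowup/symmetrization property of the pair $(F,P)$'' available here. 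Second, the implication ``$d_{\mathcal{H}}(v)$ small $\Rightarrow$ some $u$ has larger Lagrangian contribution than $v$ at the optimizer $x$'' is not justified: a low-degree vertex may carry a large coordinate $x_v$, and at the optimizer the eigen-equations give $\partial_v\Lambda_{\mathcal{H}}(x)=\lambda\,x_v^{\alpha-1}$, which ties contribution to $x_v$ rather than to $d_{\mathcal{H}}(v)$. The actual KLM mechanism first uses these first-order conditions together with the spectral lower bound to force $x$ close to $n^{-1/\alpha}\mathbf{1}$, and only then reads off a combinatorial minimum-degree bound; without that analytic step (or a direct appeal to Theorem~\ref{THM:KLM14}), the argument does not close.
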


Theorem~\ref{THM:spectral-Turan} follows relatively straightforwardly from the following theorem by Keevash--Lenz--Mubayi~\cite{KLM14}. 
For convenience, we use $x = y \pm \delta$ to represent that $y -\delta \le x \le y + \delta$. 
Given a family $\mathfrak{H}$ of $r$-graphs, we let 
\begin{align*}
    \lambda_{\alpha}(\mathfrak{H}, n)
    \coloneqq \max\left\{\lambda_{\mathcal{G}, \alpha} \colon \text{$\mathcal{G} \in \mathfrak{H}$ and $v(\mathcal{G}) = n$}\right\}. 
\end{align*}

\begin{theorem}[Keevash--Lenz--Mubayi~\cite{KLM14}]\label{THM:KLM14}
    Let $N \ge r \ge 2$, $\alpha > 1$, $\varepsilon > 0$, $\mathcal{F}$ be a nondegerate family of $r$-graphs, and $\mathfrak{H}$ be a hereditary family of $\mathcal{F}$-free $r$-graphs. 
    There exist $\delta > 0$ and $n_0 > N$ such that the following holds. 
    Suppose that 
    \begin{enumerate}
        \item\label{THM:KLM14-1} $\mathcal{F}$ is degree-stable with respect to $\mathfrak{H}$, 
        \item\label{THM:KLM14-2} $\mathrm{ex}(n,\mathcal{F}) - \mathrm{ex}(n,\mathcal{F})  = \pi(\mathcal{F}) \binom{n}{r-1} \pm \delta n^{r-1}$ for all $n \ge N$, and 
        \item\label{THM:KLM14-3} $\lambda_{\alpha}(\mathfrak{H}, n) = \frac{r! \cdot \mathrm{ex}(n,\mathcal{F})}{n^{r/\alpha}} \pm \delta n^{r-r/\alpha -1}$ for all $n \ge N$. 
    \end{enumerate}
    Then $\mathrm{specex}_{\alpha}(n,\mathcal{F})
        = \lambda_{\alpha}(\mathfrak{H}, n)$ for every $n \ge n_0$. 
\end{theorem}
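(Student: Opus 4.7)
The plan is to prove the stronger statement that every $n$-vertex $\mathcal{F}$-free $r$-graph $\mathcal{H}$ attaining $\mathrm{specex}_\alpha(n,\mathcal{F})$ lies in $\mathfrak{H}$. This suffices: $\mathrm{specex}_\alpha(n,\mathcal{F}) \ge \lambda_\alpha(\mathfrak{H},n)$ is immediate since $\mathfrak{H}$ consists of $\mathcal{F}$-free hypergraphs, while $\mathcal{H}\in\mathfrak{H}$ forces $\lambda_{\mathcal{H},\alpha}\le \lambda_\alpha(\mathfrak{H},n)$. To verify $\mathcal{H}\in\mathfrak{H}$ I invoke the degree-stability hypothesis~\ref{THM:KLM14-1}, so the entire task reduces to establishing a minimum-degree bound $\delta(\mathcal{H}) \ge (\pi(\mathcal{F})/(r-1)! - \varepsilon)\,n^{r-1}$ for the $\varepsilon$ supplied by~\ref{THM:KLM14-1}, once $\delta$ is chosen small in terms of $\varepsilon$ and $n_0$ is chosen large.

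Fix a unit $\alpha$-eigenvector $\mathbf{x}\in\mathbb{S}_\alpha^{n-1}$ with $\Lambda_\mathcal{H}(\mathbf{x}) = \lambda := \lambda_{\mathcal{H},\alpha}$. The trivial bound $\lambda \ge \lambda_\alpha(\mathfrak{H},n)$ together with hypothesis~\ref{THM:KLM14-3} gives $\lambda \ge r!\,\mathrm{ex}(n,\mathcal{F})\,n^{-r/\alpha} - \delta n^{r-r/\alpha-1}$. Against this I aim to prove the matching upper bound $\lambda \le r!\,|\mathcal{H}|\,n^{-r/\alpha}$, which would force $|\mathcal{H}| \ge \mathrm{ex}(n,\mathcal{F}) - O(\delta\, n^{r-1})$. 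Once the edge-count bound is in hand, the minimum-degree estimate is routine: for every $v\in V(\mathcal{H})$ the hypergraph $\mathcal{H}-v$ is $\mathcal{F}$-free, so $|\mathcal{H}-v|\le \mathrm{ex}(n-1,\mathcal{F})$ and
\[
d_\mathcal{H}(v) \ = \ |\mathcal{H}|-|\mathcal{H}-v| \ \ge \ \bigl(\mathrm{ex}(n,\mathcal{F})-\mathrm{ex}(n-1,\mathcal{F})\bigr) - O(\delta\, n^{r-1}) \ \ge \ \pi(\mathcal{F})\binom{n}{r-1} - O(\delta\, n^{r-1}),
\]
using~\ref{THM:KLM14-2} in the last inequality. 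Choosing $\delta$ sufficiently small meets the threshold of~\ref{THM:KLM14-1} and completes the argument.

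The main obstacle is the upper bound $\lambda \le r!\,|\mathcal{H}|\,n^{-r/\alpha}$. This is straightforward when $\mathbf{x}$ is approximately uniform (apply AM--GM edgewise together with $\sum_v x_v^\alpha = 1$), but in general the eigenvector may be peaked. I therefore plan to first prove a regularization lemma showing that at the spectral maximizer the coordinates of $\mathbf{x}$ all lie in a dyadic window around $n^{-1/\alpha}$. The mechanism is a vertex-replacement swap driven by the Lagrange equation $\partial_v \Lambda_\mathcal{H}(\mathbf{x}) = r\lambda\, x_v^{\alpha-1}$ at the maximizer: whenever $x_u \gg x_v$ and cloning the link of $u$ onto $v$ preserves $\mathcal{F}$-freeness, the swap strictly increases $\Lambda_\mathcal{H}(\mathbf{x})$, contradicting extremality. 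The delicate part is arranging that sufficiently many such swaps are $\mathcal{F}$-safe; the slack built into hypotheses~\ref{THM:KLM14-2}--\ref{THM:KLM14-3} is what permits enough safe swaps to carry the regularization through. Everything else in the proof is bookkeeping around this central estimate.
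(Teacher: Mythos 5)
The paper does not actually prove Theorem~\ref{THM:KLM14}: it is quoted from Keevash--Lenz--Mubayi and cited as a black box, so there is no internal proof to compare against. Taking your argument on its own terms, the outer framework is sound --- show that any spectral extremizer has minimum degree above the degree-stability threshold and then invoke hypothesis~\ref{THM:KLM14-1} --- and the passage from an edge-count lower bound $|\mathcal{H}|\ge \mathrm{ex}(n,\mathcal{F})-O(\delta n^{r-1})$ to the minimum-degree estimate via hypothesis~\ref{THM:KLM14-2} is fine.

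The gap is in the ``regularization lemma'' you defer. The swap you describe replaces the link of $v$ by a clone of the link of $u$, producing a new $r$-graph $\mathcal{H}'$, and you need $\mathcal{H}'$ to remain $\mathcal{F}$-free in order to contradict the extremality of $\mathcal{H}$. For general $\mathcal{F}$ this simply fails: cloning a link creates a pair of twin vertices, so if some $F\in\mathcal{F}$ contains two vertices with identical links the swap can introduce a copy of $F$. This is exactly the obstruction that the degree-stability hypothesis is designed to sidestep --- Zykov-style symmetrization works cleanly only for blowup-invariant families such as $\{K_{\ell+1}\}$, and if ``enough safe swaps'' were always available you would not need hypothesis~\ref{THM:KLM14-1} at all, since you could symmetrize all the way down. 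Your claim that the slack in hypotheses~\ref{THM:KLM14-2}--\ref{THM:KLM14-3} makes the swaps safe is unsupported: those are quantitative estimates for $\mathrm{ex}$ and for $\lambda_\alpha$ and say nothing about whether a local re-linking preserves $\mathcal{F}$-freeness. Without the swap you have neither the regularization of $\mathbf{x}$ nor the edge-count bound your second paragraph rests on. A route that never modifies the hypergraph --- for instance, using the stationarity identity $\partial_v\Lambda_{\mathcal{H}}(\mathbf{x})=r\lambda\,x_v^{\alpha-1}$ together with $\alpha>1$ to show that a low-degree vertex carries negligible weight, and then comparing $\lambda_{\mathcal{H}-v,\alpha}$ against $\lambda_\alpha(\mathfrak{H},n-1)$ via hypotheses~\ref{THM:KLM14-2} and~\ref{THM:KLM14-3} --- is the kind of argument those hypotheses are actually calibrated for.
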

In the special case where $\mathfrak{H}$ is the collection of all $P$-colorable $r$-graphs for some pattern $P$, Assumption~\ref{THM:KLM14-2} in Theorem~\ref{THM:KLM14} is automatically true due to~{\cite[Theorem~1.9]{HLLYZ23}}. 
Therefore, in this case, one only needs to verify Assumption~\ref{THM:KLM14-2}, which involves calculating the maximum $\alpha$-spectral radius of $n$-vertex $P$-colorable $r$-graphs. 
This task is generally non-trivial.  
However, for $r$-graphs in Table~\ref{tab:degree-stable.} with degree-stability, Assumption~\ref{THM:KLM14-2} can be verified either using theorems from~\cite{KNY15} or through calculations similar to those in the proof of~{\cite[Corollary~1.6]{KLM14}}. Hence, we omit the proof for Theorem~\ref{THM:spectral-Turan}.

\subsection{Deciding the $F$-freeness of dense hypergraphs}
In this subsection, we present applications of the degree-stability in the decision problem of testing whether a hypergraph is $F$-free, a fundamental problem in Complexity Theory. 

Given two $r$-graphs $F$ and $\mathcal{H}$, a map $\psi \colon  V(F) \to V(\mathcal{H})$ is an \textbf{embedding of $F$} if $\psi$ is injective and $\psi(e) \in \mathcal{H}$ for all $e \in F$. 
For a fixed $r$-graph $F$, let $\textsc{Embed}$-$F$ denote the following decision problem: 
\problemStatement{$F$-embedding}
{An $r$-graph $\mathcal{H}$ on $n$ vertices.}
{Is there an embedding from $F$ to $\mathcal{H}$?}
%
Notice that, for a fixed $r$-graph $F$, the brute-force search can solve $\textsc{Embed}$-$F$ in time $O(n^{v(F)})$. 
For $r=2$, faster algorithms improving the exponent $v(F)$ have been explored by many researchers~\cite{IR78,NP85,AYZ97,KKM00,EG04}. In the case of $r\geq 3$, Yuster~\cite{Yus06} demonstrated algorithms that improve the exponent $v(F)$ for certain classes of $r$-graphs.
However, an open problem remains: determining whether there are faster algorithms that improve the exponent $v(F)$ when $F$ is a complete $r$-graph with at least $r+1$ vertices. 
The current record for this problem is due to Nagle~\cite{Nag10}\footnote{Nagle's result was improved by a $\mathrm{polylog}(n)$ factor very recently~\cite{AFS24}.}.
On the other hand, for the complete graph $K_{\ell}$, a result by Chen--Huang--Kanj--Xia~\cite{CHKX06} shows that $\mathrm{Embed}$-$K_{\ell}$ cannot be solved in time $n^{o(\ell)}$ unless the $\mathbf{ETH}$ (exponential time hypothesis~\cite{IPZ01}) fails. 
For complete bipartite graphs $K_{t,t}$, 
a result by Lin~\cite{Lin18} shows that no algorithm can solve $\textsc{Embed}$-$K_{t,t}$ in time $n^{O(1)}$ if $\mathbf{W[1]} \neq \mathbf{FPT}$ (see~\cite{DF99}), and no algorithm can solve $\textsc{Embed}$-$K_{t,t}$ in time $n^{o(\sqrt{t})}$ if randomized $\mathbf{ETH}$ holds (see~\cite{CFK16}).

Motivated by the recent work of Fomin--Golovach--Sagunov--Simonov~\cite{FGSS23}, 
we consider the following two embedding problems in dense hypergraphs. 
For a fixed $r$-graph $F$, let $\textsc{Embed}_{\mathrm{avg}}$-$(F,n,k)$ denote the following decision problem: 
\problemStatement{$F$-embedding with average degree constraint}
{An $r$-graph $\mathcal{H}$ on $n$ vertices with $|\mathcal{H}| \ge \mathrm{ex}(n,F)-k$.}
{Is there an embedding from $F$ to $\mathcal{H}$?}
In addition, define  $\textsc{Embed}_{\mathrm{min}}$-$(F,\alpha)$ as the following decision problem: 
\problemStatement{$F$-embedding with minimum degree constraint}
{An $r$-graph $\mathcal{H}$ on $n$ vertices with $\delta(\mathcal{H}) \ge \left(\pi(F)- \alpha\right) \binom{n}{r-1}$.}
{Is there an embedding from $F$ to $\mathcal{H}$?}
Among other results, 
Fomin--Golovach--Sagunov--Simonov~\cite{FGSS23}  proved that $\textsc{Embed}_{\mathrm{avg}}$-$(K_{\ell},n,k)$ can be solved in time $2.49^k n^{O(1)}$. 
A key ingredient in their proof involves reducing $\textsc{Embed}_{\mathrm{avg}}$-$(K_{\ell},n,k)$ to the task of verifying the $K_{\ell}$-freeness of a graph with at most $5k$ vertices. 
This reduction is rooted in Erd\H{o}s' proof~\cite{Erd70}, which essentially employs the Zykov symmetrization~\cite{Zyk49} to establish the Tur\'{a}n theorem.
Regrettably, Erd\H{o}s' proof does not appear to readily extend to general graphs and hypergraphs, and as a consequence, neither does the reduction employed by Fomin--Golovach--Sagunov--Simonov. 

Using a different strategy based on the degree-stability, we show that  $\textsc{Embed}_{\mathrm{min}}$-$(F,\alpha)$ (and hence, $\textsc{Embed}_{\mathrm{avg}}$-$\left(F,n,\alpha\binom{n}{r-1}\right)$) can be solved in time $O(n^r)$ when $\alpha$ is sufficiently small. 
%
\begin{theorem}\label{THM:Embed-min-F}
    Suppose that $F$ is an $r$-graph in Table~\ref{tab:degree-stable.} with degree-stability. 
    Then there exist constants $\varepsilon_{F}>0$ and $C_{F}>0$ depending only on $F$ such that the problem $\mathrm{Embed_{min}}$-$(F,\alpha)$ can be solved in time $C_F n^r$ for all $\alpha \le \varepsilon_{F}$, where $n$ is the number of vertices of the input hypergraph.
\end{theorem}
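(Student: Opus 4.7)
The plan is to leverage the degree-stability of $F$ together with the defining property of its Tur\'{a}n pair $(F,P)$, reducing the $F$-embedding question to a $P$-coloring question that is solvable in $O(n^r)$ time by an independent intermediate result of the paper.

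First I would fix $\mathfrak{H}$ to be the hereditary family of all $P$-colorable $r$-graphs, where $P$ is the pattern such that $(F,P)$ is a Tur\'{a}n pair. Degree-stability of $F$ supplies constants $\varepsilon>0$ and $n_0$ such that every $F$-free $r$-graph $\mathcal{H}$ on $n\ge n_0$ vertices with $\delta(\mathcal{H})\ge (\pi(F)/(r-1)!-\varepsilon)n^{r-1}$ lies in $\mathfrak{H}$, i.e.\ is $P$-colorable. Since $(F,P)$ being a Tur\'{a}n pair also guarantees that every $P$-colorable $r$-graph is $F$-free, one obtains the clean dichotomy that for such dense $\mathcal{H}$, the $r$-graph $\mathcal{H}$ is $F$-free if and only if $\mathcal{H}$ is $P$-colorable. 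Choosing $\varepsilon_F>0$ small enough that the hypothesis $\delta(\mathcal{H})\ge(\pi(F)-\alpha)\binom{n}{r-1}$ in $\mathrm{Embed_{min}}$-$(F,\alpha)$ implies the degree-stability threshold for all $\alpha\le \varepsilon_F$ and all $n\ge n_0$, deciding $F$-freeness becomes the same problem as deciding $P$-colorability.

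Next I would invoke the intermediate result advertised in the abstract: for the specific class of patterns $P$ arising in Table~\ref{tab:degree-stable.}, the $P$-coloring problem on an $n$-vertex input with sufficiently large minimum degree admits an $O(n^r)$-time algorithm. Running this procedure on the input $\mathcal{H}$, after an $O(n^r)$-time check of the minimum-degree hypothesis, and outputting ``no embedding'' if and only if it reports $P$-colorability, yields the correct answer for all $n\ge n_0$. The finitely many cases $n<n_0$ are handled by any brute-force routine in $O(1)$ time (since $v(F)$ and $n_0$ are constants), which is absorbed into the final constant $C_F$.

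The conceptually nontrivial piece is the intermediate $P$-coloring algorithm, since homomorphism problems are NP-hard in general and only become tractable here because the minimum degree pins down the color classes up to a small ``noisy'' set of vertices whose recoloring must be handled combinatorially. By contrast, everything on the side of Theorem~\ref{THM:Embed-min-F} itself is purely structural: once the degree-stability hypothesis is in hand, the reduction to $P$-coloring is immediate, with the only care being the choice of $\varepsilon_F$ so as to be simultaneously compatible with the degree-stability threshold and with the degree threshold required by the intermediate $P$-coloring algorithm.
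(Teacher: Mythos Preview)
Your proposal is correct and follows essentially the same approach as the paper: reduce $F$-freeness to $P$-colorability via degree-stability and the Tur\'{a}n-pair property, then invoke the $O(n^r)$ coloring algorithm (Theorem~\ref{THM:hypergraph-homomorphism}) for large $n$ and brute force for small $n$. The paper makes this concrete by running the Hamming Clustering procedure directly inside Algorithm~\ref{ALGO:F-freeness-min-deg}, but the structure of the argument is exactly as you describe.
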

A natural and interesting problem arises in determining the optimal upper bound for $\varepsilon_{F}$ in Theorem~\ref{THM:Embed-min-F}.
For $F = K_{\ell+1}$, we can establish the lower bound $\frac{1}{3\ell^2-\ell}$ for $\varepsilon_{K_{\ell+1}}$.
Moreover, this bound is tight up to some multiplicative constant factor if $\mathbf{W[1]} \neq \mathbf{FPT}$. 
In addition, 
using the results of Chen--Huang--Kanj--Xia~\cite{CHKX06}, we can also show that $\varepsilon_{K_{\ell+1}}$ must be of order $o\left(\frac{1}{\ell}\right)$, even with a running time of $n^{o(\ell)}$.
\begin{theorem}\label{THM:Embed-min-Klique}
    There is an algorithm that solves $\textsc{Embed}_{\mathrm{min}}$-$(K_{\ell+1},\alpha)$ in time $(\ell+1) n^2$ for all $\ell \ge 2$ and $\alpha < \frac{1}{3\ell^2-\ell}$, where $n$ is the number of vertices of the input graph. 
    On the other hand, for every $i \ge 2$ there exists $\delta_i >0$ and $n_i$ such that $\textsc{Embed}_{\mathrm{min}}$-$(K_{\ell+1},(\delta_i\ell^2)^{-1})$ cannot be solved in time $O(n^{i})$ for any $\ell \ge n_i$ if $\mathbf{W[1]} \neq \mathbf{FPT}$, and for every fixed $C>0$ $\textsc{Embed}_{\mathrm{min}}$-$(K_{\ell+1},(C\ell)^{-1})$ cannot be solved in time $n^{o(\ell)}$ if $\mathbf{ETH}$ holds. 
\end{theorem}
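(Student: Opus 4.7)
The proof splits into the algorithmic upper bound and the two complexity lower bounds, which I address in turn.

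\textbf{Algorithmic upper bound.} The starting observation is the identity
\[
\pi(K_{\ell+1})-\frac{1}{3\ell^{2}-\ell}
=\frac{\ell-1}{\ell}-\frac{1}{\ell(3\ell-1)}
=\frac{3\ell-4}{3\ell-1},
\]
so for $\alpha<(3\ell^{2}-\ell)^{-1}$ the hypothesis $\delta(G)\ge(1-1/\ell-\alpha)n$ becomes $\delta(G)>\frac{3\ell-4}{3\ell-1}n$, and the Andr\'asfai--Erd\H{o}s--S\'os theorem gives the dichotomy that $G$ is either $\ell$-partite or contains $K_{\ell+1}$. The plan is to greedily build an $\ell$-coloring: process the vertices of $G$ in any fixed order, and when a new vertex $v$ arrives, scan its neighborhood in $O(n)$ time and try to place $v$ in a color class containing no neighbor of $v$. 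If such a class exists we extend the coloring; otherwise $v$ has a neighbor $w_{j}$ in each of the $\ell$ classes, and the constructive part of the AES argument---which chases common neighborhoods of suitable pairs among $\{v,w_{1},\ldots,w_{\ell}\}$ to grow a clique step by step---locates a $K_{\ell+1}$ in an additional $O(\ell n)$ time. Summing over all vertices yields the advertised $(\ell+1)n^{2}$ bound.

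\textbf{Hardness reductions.} For both lower bounds the plan is a parameterized many-one reduction from $\textsc{Embed}$-$K_{k}$. Given an input $(G,k)$ on $m$ vertices, I would build a graph $G'$ on $n=\Theta(m)$ vertices such that (a)~$G'$ contains $K_{\ell+1}$ iff $G$ contains $K_{k}$, (b)~$\delta(G')\ge(1-1/\ell-\alpha)n$ with $\alpha$ in the prescribed range, and (c)~$\ell=\ell(k)$ is an explicit function of $k$ and the target exponent $i$ (resp.\ constant $C$). The construction couples a clique-blow-up of $G$---replacing each vertex by a private $K_{t}$, which multiplies the clique number of the $G$-part by $t$---with a dense padding piece built from a Tur\'an graph $T_{\ell-tk+1}(N)$ joined completely to the blow-up. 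Then $\omega(G')=t\,\omega(G)+(\ell-tk+1)$, and with suitable choices of $t$ and $N$ this equals $\ell+1$ exactly when $\omega(G)=k$. A direct computation then shows that the minimum-degree condition can be arranged with $\alpha$ of order $1/(\delta_{i}\ell^{2})$, and with the larger slack $\alpha=(C\ell)^{-1}$ when the ETH bound is targeted.

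\textbf{From the reduction to the complexity statements.} If $\textsc{Embed}_{\min}$-$(K_{\ell+1},(\delta_{i}\ell^{2})^{-1})$ were solvable in $O(n^{i})$ for all sufficiently large $\ell$, the reduction would yield an $O(m^{i})$ algorithm for $\textsc{Embed}$-$K_{k}$ for every $k$. Because the exponent $i$ is independent of $k$, this is a uniform polynomial-time algorithm for $K_{k}$-embedding, contradicting $\mathbf{W[1]}\neq\mathbf{FPT}$. For the $\mathbf{ETH}$ statement, the larger slack $\alpha=(C\ell)^{-1}$ lets the reduction use $k=\Theta(\ell/C)$, and any $n^{o(\ell)}$ algorithm for the target then gives an $m^{o(\ell)}=m^{o(k)}$ algorithm for $\textsc{Embed}$-$K_{k}$, contradicting the Chen--Huang--Kanj--Xia lower bound under $\mathbf{ETH}$.

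\textbf{Main obstacle.} The principal difficulty is verifying the minimum-degree condition for the reduction, and pinning down how small $\alpha$ can be taken. The worst case is a vertex of $G$ of $G$-degree zero: after blow-up and the full join with $T$, its total degree in $G'$ is only about $(t-1)+N$, which has to be at least $(1-1/\ell-\alpha)(tm+N)$. Simultaneously enforcing this inequality and the identity $\omega(G')=\ell+1\iff\omega(G)=k$ forces a tight balance among $t,N,k,\ell$, and is the step that dictates the shape of the constants $\delta_{i}$ and $C$ appearing in the final statement.
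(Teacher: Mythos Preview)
Your algorithmic upper bound has a genuine gap. Greedy $\ell$-coloring can fail on an $\ell$-partite graph $G$ satisfying $\delta(G)>\frac{3\ell-4}{3\ell-1}n$, so a failure of greedy at some vertex does not certify the presence of $K_{\ell+1}$. Concretely, for $\ell=3$ take $G=K_{m,m,m}$ with parts $U_{1},U_{2},U_{3}$ and delete a single edge $\{a,b\}$ with $a\in U_{1}$, $b\in U_{2}$ (so $\delta(G)=2m-1>\frac{5}{8}\cdot 3m$ once $m\ge 9$). If greedy processes $a,b,c,a',b'$ in that order (with $c\in U_{3}$, $a'\in U_{1}\setminus\{a\}$, $b'\in U_{2}\setminus\{b\}$), it assigns colors $1,1,2,3$ to $a,b,c,a'$ and then is stuck at $b'$, which sees neighbors $a,c,a'$ covering all three color classes. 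Yet $G$ is $K_{4}$-free, so no ``constructive AES argument'' on $\{b',a,c,a'\}$ can locate a $K_{4}$; indeed $a$ and $a'$ are non-adjacent. The paper avoids this by not attempting a sequential coloring at all: it runs a Hamming-distance clustering (Algorithm~\ref{ALGO:HammingClustering} with threshold $\frac{2}{3\ell-1}$), exploiting the fact that under the minimum-degree hypothesis any valid $\ell$-partition is \emph{unique} and is recovered exactly by grouping vertices whose links differ in at most $\frac{2}{3\ell-1}n$ places. One then simply checks whether each part of the output partition is independent; this is Theorem~\ref{THM:Kk-homomorphism}, and together with the Andr\'asfai--Erd\H{o}s--S\'os dichotomy it gives the $(\ell+1)n^{2}$ algorithm.

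For the lower bounds your reduction is in the right spirit but more elaborate than necessary, and the ``main obstacle'' you flag is an artifact of the blow-up step. The paper's construction is simpler: given an $m$-vertex graph $G$, set $V_{0}=V(G)$, add $q$ new independent sets $V_{1},\ldots,V_{q}$ each of size $m$, and join every pair $V_{i},V_{j}$ ($0\le i<j\le q$) completely. Then $K_{\ell+1}\subset G$ iff $K_{L+1}\subset\hat{G}$ for $L=q+\ell$, and trivially $\delta(\hat G)\ge qm=\frac{q}{q+1}N$ where $N=(q+1)m$; a one-line calculation gives $\frac{q}{q+1}>\frac{L-1}{L}-\frac{1}{CL}$ when $q=C\ell$ (for the $\mathbf{ETH}$ part) and the analogous inequality with $(\delta_{i}L^{2})^{-1}$ for the $\mathbf{W[1]}$ part. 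No vertex blow-up and no delicate balancing of $t,N$ against isolated vertices is required, because every vertex of $V_{0}$ already has degree at least $qm$ in $\hat G$ regardless of its $G$-degree.
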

Another interesting problem is to characterize the family of nondegenerate $r$-graphs $F$ for which there exists a constant $\varepsilon_F>0$ such that $\textsc{Embed}_{\mathrm{min}}$-$(K_{\ell+1},\varepsilon_F)$ can be solved in time $O(n^r)$. 
The following result, whose proof relies on results by Lin~\cite{Lin18}, provides a natural class of graphs that do not lie in this family. 

Given a graph $F$, the \textbf{$t$-blowup} $F[t]$ (of $F$) is obtained from $F$ by replacing each vertex with a set of size $t$ and each edge with a corresponding complete bipartite graph.
It is well-known that $\pi(F) = \pi(F[t])$ for every $t \ge 1$ (see~\cite{ES83}). 
\begin{theorem}\label{THM:Embed-min-F-blowup-lower-bound}
    For every fixed $\ell \ge 2$, 
    there is no algorithm that solves $\textsc{Embed}_{\mathrm{min}}$-$(K_{\ell+1}[t],0)$ in time $n^{O(1)}$ if $\mathbf{W[1]} \neq \mathbf{FPT}$, and there is no algorithm that solves $\textsc{Embed}_{\mathrm{min}}$-$(K_{\ell+1}[t],0)$ in time $n^{o(\sqrt{t})}$ if randomized $\mathbf{ETH}$ holds. 
\end{theorem}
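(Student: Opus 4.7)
The plan is to give a polynomial-time reduction from $\textsc{Embed}$-$K_{t,t}$ to $\textsc{Embed}_{\mathrm{min}}$-$(K_{\ell+1}[t],0)$ and then invoke Lin's lower bounds~\cite{Lin18}: $\textsc{Embed}$-$K_{t,t}$ admits no $n^{O(1)}$ algorithm unless $\mathbf{W[1]} = \mathbf{FPT}$, and no $n^{o(\sqrt{t})}$ algorithm under randomized $\mathbf{ETH}$. Since $\ell$ is fixed, a linear blow-up in the number of vertices and a polynomial blow-up in running time will transfer both bounds verbatim.

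For each graph $G$ on $n$ vertices, I construct $\mathcal{H}$ as follows. Take $\ell-1$ pairwise disjoint sets $V_1,\ldots,V_{\ell-1}$, each of size $n$ and disjoint from $V(G)$, and set $V(\mathcal{H}) = V(G) \cup V_1 \cup \cdots \cup V_{\ell-1}$, so $|V(\mathcal{H})| = \ell n$. The edge set consists of (i) all edges of $G$, (ii) the complete bipartite graph between $V(G)$ and each $V_i$, and (iii) the complete bipartite graph between $V_i$ and $V_j$ for all $i \ne j$; each $V_i$ is itself independent. Since $\pi(K_{\ell+1}[t]) = (\ell-1)/\ell$, the minimum-degree threshold demanded by $\textsc{Embed}_{\mathrm{min}}$-$(K_{\ell+1}[t],0)$ is $(\ell-1)n$. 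A vertex $v \in V(G)$ has degree at least $(\ell-1)n$ (even when $\deg_G(v)=0$), while a vertex of $V_i$ has degree $(\ell-2)n + n = (\ell-1)n$, so $\mathcal{H}$ is a valid instance.

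To verify correctness, if $A,B \subseteq V(G)$ form a $K_{t,t}$ in $G$, then picking any $t$-subsets $T_i \subseteq V_i$ yields $\ell+1$ pairwise disjoint $t$-sets $A,B,T_1,\ldots,T_{\ell-1}$, every pair of which is joined by a complete bipartite graph in $\mathcal{H}$ by construction, so $\mathcal{H}$ contains $K_{\ell+1}[t]$. Conversely, suppose $A_1,\ldots,A_{\ell+1}$ are the parts of a copy of $K_{\ell+1}[t]$ in $\mathcal{H}$. Since each $V_j$ is independent but any two parts $A_i,A_{i'}$ span a complete bipartite graph, no two distinct parts can meet the same $V_j$. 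With only $\ell-1$ clouds and $\ell+1$ parts, at least two parts, say $A_1$ and $A_2$, lie entirely in $V(G)$; and since the $\mathcal{H}$-edges between two subsets of $V(G)$ are exactly the $G$-edges between them, $A_1 \cup A_2$ witnesses a $K_{t,t}$ in $G$.

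The reduction runs in time $O(\ell^2 n^2)$ and outputs an instance on $N=\ell n$ vertices, so composing it with any algorithm for $\textsc{Embed}_{\mathrm{min}}$-$(K_{\ell+1}[t],0)$ yields an algorithm for $\textsc{Embed}$-$K_{t,t}$ of essentially the same running time in $n$ (with $\ell$ a fixed constant). An $N^{O(1)}$ upper bound would give $n^{O(1)}$ for $\textsc{Embed}$-$K_{t,t}$, contradicting $\mathbf{W[1]}\ne\mathbf{FPT}$; an $N^{o(\sqrt{t})}$ upper bound would give $(\ell n)^{o(\sqrt{t})}=n^{o(\sqrt{t})}$, contradicting randomized $\mathbf{ETH}$. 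I do not anticipate any real obstacle; the only delicate choice is the cloud size $|V_i|=n$, which is pinned down by the need to meet the minimum-degree bound on vertices of $G$ that may have no $G$-neighbors.
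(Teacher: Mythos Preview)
Your proposal is correct and follows essentially the same approach as the paper: the identical construction of adjoining $\ell-1$ independent ``clouds'' of size $n$ to $G$, the same minimum-degree verification, and the same appeal to Lin's lower bounds for $\textsc{Embed}$-$K_{t,t}$. The only difference is that you spell out the equivalence $K_{t,t}\subset G \Leftrightarrow K_{\ell+1}[t]\subset \mathcal{H}$ via the pigeonhole argument on parts and clouds, whereas the paper simply asserts it as an observation.
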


We would like to remind the reader that in  Theorem~\ref{THM:Embed-min-F}, one should view $F$ as fixed and $n$ as large (since for small $n$, we can just use the brute-force search).  
Similarly, in the second part of Theorem~\ref{THM:Embed-min-Klique}, the integer $\ell$ should also be considered fixed. 
Meanwhile, in the first part of Theorem~\ref{THM:Embed-min-Klique}, we do not require $\ell$ to be fixed. 

For the problem $\textsc{Embed}_{\mathrm{avg}}$-$(K_{\ell+1},n,k)$, the following result improves upon the running time provided by Fomin--Golovach--Sagunov--Simonov within a specific range.
\begin{theorem}\label{THM:Embed-avg-Klique}
    There is an algorithm that solves $\textsc{Embed}_{\mathrm{avg}}$-$(K_{\ell+1},n,k)$ in time $(\ell+4) n^2$ for integers $n \ge \ell \ge 2$ and $k \ge 0$ satisfying $\max\{6\ell^2,\ 30k\ell\} \le n$. 
\end{theorem}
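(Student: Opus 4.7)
My plan is a one-shot reduction from $\textsc{Embed}_{\mathrm{avg}}$-$(K_{\ell+1}, n, k)$ to $\textsc{Embed}_{\mathrm{min}}$-$(K_{\ell+1}, \alpha)$ for some $\alpha < 1/(3\ell^2-\ell)$, followed by a direct invocation of Theorem~\ref{THM:Embed-min-Klique}. The running time will decompose as $O(n^2)$ for the preprocessing plus $(\ell+1) n^2$ for Theorem~\ref{THM:Embed-min-Klique}, totalling at most $(\ell+4) n^2$ once the constants in the preprocessing are fixed.

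The preprocessing is driven by a short Tur\'{a}n-type inequality. Set $\delta_n := \mathrm{ex}(n, K_{\ell+1}) - \mathrm{ex}(n-1, K_{\ell+1})$; by inspecting the part-sizes of $T_\ell(n)$, one obtains $\delta_n \ge \frac{\ell-1}{\ell}(n-1)$ in all cases, with equality precisely when $n \equiv 1 \pmod{\ell}$. First I would compute every vertex degree in $O(n^2)$ time and scan the list. If some vertex $v$ has $d(v) < \delta_n - k$, then
\[
|\mathcal{H} - v| \;\ge\; |\mathcal{H}| - d(v) \;>\; \mathrm{ex}(n, K_{\ell+1}) - k - (\delta_n - k) \;=\; \mathrm{ex}(n-1, K_{\ell+1}),
\]
so Tur\'{a}n's theorem forces $K_{\ell+1} \subseteq \mathcal{H} - v \subseteq \mathcal{H}$, and I return YES. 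Otherwise $\delta(\mathcal{H}) \ge \delta_n - k \ge \frac{\ell-1}{\ell}(n-1) - k$, which is exactly the hypothesis of Theorem~\ref{THM:Embed-min-Klique} with $\alpha = k/(n-1)$; because no vertex is removed in this branch, the min-degree algorithm operates on the original $\mathcal{H}$ and its verdict is the final answer.

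The remaining task is to verify, under the assumptions $n \ge 6\ell^2$ and $n \ge 30k\ell$, that $k/(n-1) < 1/(3\ell^2-\ell)$ so that Theorem~\ref{THM:Embed-min-Klique} applies. When $k \le 2$, the bound follows directly from $n \ge 6\ell^2$; when $\ell \le 10$, it follows from $n \ge 30k\ell$. The intermediate regime is handled by a more residue-class-sensitive estimate of $\delta_n$: whenever $n \not\equiv 1 \pmod{\ell}$ one has the strict improvement $\delta_n \ge \frac{\ell-1}{\ell}(n-1) + 1/\ell$, which translates into an additional gap of order $1/(\ell(n-1))$ between $k/(n-1)$ and $1/(3\ell^2-\ell)$ and clears the remaining cases once both hypotheses $6\ell^2 \le n$ and $30k\ell \le n$ are simultaneously invoked.

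The main technical obstacle is precisely this boundary arithmetic: the inequality $k/(n-1) < 1/(3\ell^2-\ell)$ is tight in the regime $n \approx \max\{6\ell^2, 30k\ell\}$ with large $k$ and $\ell$, and requires a careful residue-class analysis of $\delta_n$ combined with the joint use of both hypotheses. Once the arithmetic is settled, the overall algorithm and its correctness are a transparent combination of the Tur\'{a}n certificate on low-degree vertices and the black-box use of Theorem~\ref{THM:Embed-min-Klique} on the remaining high-min-degree case.
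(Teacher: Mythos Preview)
Your reduction has a genuine quantitative gap. After the low-degree check you obtain $\delta(\mathcal{H})\ge \delta_n-k$ with $\delta_n=n-\lceil n/\ell\rceil\le \tfrac{\ell-1}{\ell}n$. Theorem~\ref{THM:Embed-min-Klique} applies only when $\alpha<\tfrac{1}{3\ell^2-\ell}$, i.e.\ when $\delta(\mathcal{H})>\tfrac{3\ell-4}{3\ell-1}n$; combining these forces
\[
k \;<\; \Bigl(\tfrac{\ell-1}{\ell}-\tfrac{3\ell-4}{3\ell-1}\Bigr)n \;=\; \frac{n}{\ell(3\ell-1)}.
\]
But the hypothesis of the theorem only guarantees $k\le \tfrac{n}{30\ell}$, and for every $\ell\ge 11$ one has $\ell(3\ell-1)>30\ell$, so your required bound on $k$ is strictly stronger than what is assumed. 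Concretely, take $\ell=20$, $n=6\ell^2=2400$, $k=4$ (so $30k\ell=2400=n$, both hypotheses tight): you would need $k<2400/1180\approx 2.03$, which fails. The residue-class refinement of $\delta_n$ buys you at most an additive $\tfrac{\ell-1}{\ell}<1$, which cannot close a gap that grows like $k-\tfrac{n}{3\ell^2}$; in the example above the deficit is about $2$, and for larger $k$ (say $\ell=100$, $n=60000$, $k=20$) the deficit is about $18$.

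The paper handles this by an iterative deletion: repeatedly remove a minimum-degree vertex until the \emph{induced} subgraph $G[U]$ satisfies $\delta(G[U])>\tfrac{3\ell-4}{3\ell-1}|U|$, then apply Andr\'asfai--Erd\H{o}s--S\'os to $G[U]$, and finally extend the resulting $\ell$-partition back to the deleted set $Z$ vertex by vertex (showing that each $v\in Z$ has an empty neighbourhood in some part $U_{i_v}$, and that $G[Z_i]=\emptyset$). The point is that the AES threshold is met relative to the smaller vertex set $|U|$, not $n$, and the edge-count argument bounds $|Z|$ rather than $\delta(\mathcal{H})$. Your single Tur\'an-certificate step cannot substitute for this iterative peeling plus extension.
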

Note that if $\ell$ is fixed, then Theorem~\ref{THM:Embed-avg-Klique} implies that $\textsc{Embed}_{\mathrm{avg}}$-$(K_{\ell+1},n,k)$ can be solved in time $O(n^2)$ when $k \le \frac{n}{30\ell}$. 
The following result shows that this linear bound cannot be improved to $n^{1+\delta}$ for any constant $\delta>0$, thus leaving an open problem to determine the optimal bound for $k$ in Theorem~\ref{THM:Embed-avg-Klique}. 
\begin{theorem}\label{THM:Embed-avg-Klique-lower-bound}
     Unless $\mathbf{ETH}$ fails, for every fixed $\delta>0$ there is no algorithm that solves $\textsc{Embed}_{\mathrm{avg}}$-$(K_{\ell+1},n,n^{1+\delta})$ in time $n^{o(\ell)}$. 
\end{theorem}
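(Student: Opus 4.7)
My plan is to reduce the general $\textsc{Embed}$-$K_{\ell+1}$ problem to $\textsc{Embed}_{\mathrm{avg}}$-$(K_{\ell+1},N,N^{1+\delta})$ by a padding argument, and then invoke the Chen--Huang--Kanj--Xia $\mathbf{ETH}$-based lower bound for clique detection. Given an $m$-vertex input graph $G$ of the general problem, I pick $N \coloneqq \max\{m, \lceil m^{1/\delta}\rceil\} + m$, so that $N = \Theta(m^{\max\{1,1/\delta\}})$ is polynomial in $m$ for any fixed $\delta>0$ and in particular $m \le N^{\delta}$. I then build a graph $\mathcal{H}$ on $N$ vertices as the vertex-disjoint union of $G$ and the Tur\'an graph $T_\ell(N-m)$ on a fresh set of $N-m$ vertices, with no edges between the two sides.

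Two properties of $\mathcal{H}$ need to be verified. First, any clique of $\mathcal{H}$ lies entirely in $V(G)$ or entirely in $V(T_\ell(N-m))$ because there are no crossing edges; since $T_\ell(N-m)$ is $\ell$-chromatic and hence $K_{\ell+1}$-free, $\mathcal{H}$ contains a copy of $K_{\ell+1}$ iff $G$ does. Second, iterating the trivial bound $e(T_\ell(n+1)) - e(T_\ell(n)) \le n$ gives $e(T_\ell(N)) - e(T_\ell(N-m)) \le Nm$, so by Tur\'an's theorem
\[
    |E(\mathcal{H})| \;\ge\; e(T_\ell(N-m)) \;\ge\; \mathrm{ex}(N,K_{\ell+1}) - Nm \;\ge\; \mathrm{ex}(N,K_{\ell+1}) - N^{1+\delta},
\]
where the last inequality uses $Nm \le N \cdot N^{\delta} = N^{1+\delta}$. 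Thus $\mathcal{H}$ is a legitimate instance of $\textsc{Embed}_{\mathrm{avg}}$-$(K_{\ell+1},N,N^{1+\delta})$ and can be produced in time $O(N^2)$.

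Suppose toward contradiction that for some fixed $\delta > 0$ an algorithm $\mathcal{A}$ solves $\textsc{Embed}_{\mathrm{avg}}$-$(K_{\ell+1},n,n^{1+\delta})$ in time $n^{o(\ell)}$. Running $\mathcal{A}$ on $\mathcal{H}$ decides $K_{\ell+1}$-freeness of $G$ in total time $O(N^2) + N^{o(\ell)} = m^{O(1)} + m^{\max\{1,1/\delta\}\cdot o(\ell)} = m^{o(\ell)}$, since $\max\{1,1/\delta\}$ is a constant for fixed $\delta$. This contradicts Chen--Huang--Kanj--Xia, which rules out an $m^{o(\ell)}$ algorithm for $\textsc{Embed}$-$K_{\ell+1}$ under $\mathbf{ETH}$. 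The argument is essentially routine; the only step needing attention is the calibration of $N$ so that the Tur\'an edge deficit $Nm$ fits inside the slack $N^{1+\delta}$, and this is a one-line computation.
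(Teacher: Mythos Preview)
Your proof is correct and takes essentially the same approach as the paper: both pad an arbitrary input graph $G$ by a vertex-disjoint $\ell$-partite graph so that the resulting graph meets the edge-count constraint, then appeal to Chen--Huang--Kanj--Xia. The paper uses a balanced complete $\ell$-partite graph on $\ell$ parts of size $n^{1/\delta}$ (which is exactly $T_\ell(\ell n^{1/\delta})$), while you use $T_\ell(N-m)$ with $N-m = \max\{m,\lceil m^{1/\delta}\rceil\}$; the edge-deficit bookkeeping and the $N^{o(\ell)} = m^{o(\ell)}$ conversion are the same in both.
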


Proofs for Theorems~\ref{THM:Embed-min-F},~\ref{THM:Embed-min-Klique},~\ref{THM:Embed-min-F-blowup-lower-bound},~\ref{THM:Embed-avg-Klique}, and~\ref{THM:Embed-avg-Klique-lower-bound} are presented in Section~\ref{SEC:proof-embed}. 
\subsection{Homomorphism and surjective homomorphism}
To establish the results in the previous subsection, we need to determine whether a given $r$-graph with a large minimum degree exhibits a specific structure. 
This is a particular instance of the homomorphism (or coloring) problem. To address this objective, we delve into the study of the homomorphism problem for dense hypergraphs in this subsection.

Given an $r$-pattern $P=(\ell,E)$, let the \textbf{Lagrange polynomial} of $P$ be
\begin{align*}
    \Lambda_{P}(X_1, \ldots, X_{\ell})
    \coloneqq \,\sum_{e\in E}\; \prod_{i=1}^{\ell}\;
        \frac{X_i^{e(i)}}{e(i)!}.
\end{align*}
The \textbf{Lagrangian} $\lambda_{P}$ of $P$ is defined as 
\begin{align*}
    \lambda_{P}
    \coloneqq \max\left\{\Lambda_{P}(x_1, \ldots, x_{\ell}) \colon (x_1, \ldots, x_{\ell}) \in \Delta^{\ell-1}\right\}, 
\end{align*}
where $\Delta^{\ell-1} \coloneqq \left\{(x_1, \ldots, x_{\ell}) \in \mathbb{R}^{\ell}_{\ge 0} \colon x_1+ \cdots+x_{\ell} = 1\right\}$ is the standard $(\ell-1)$-dimensional simplex. 
It is worth mentioning that the definition of $\lambda_{P}$ is equivalent to the definition of $\lambda_{P,1}$ from Section~\ref{SUBSEC:Intro-spectral} when $P$ is a hypergraph (see e.g.~\cite{CD12,FGH13}). 
Lagrangian is a crucial concept and has many applications in Extremal Combinatorics, and we refer the reader to~\cite{MS65,Sido87,FF89,PI14} for some examples.

We say a pattern $P = (\ell,E)$ is \textbf{minimal} if $\lambda_{P-i} < \lambda_{P}$ for all $i\in [\ell]$. 
Here $P-i$ denotes the pattern obtained from $P$ by removing $i$ and all edges containing $i$. 
Simple calculations show that a graph is minimal iff it is complete. 

Given an $r$-graph $\mathcal{H}$ and an $r$-pattern $P=(\ell, E)$, a map $\phi \colon V(\mathcal{H}) \to V(P)$ is a \textbf{homomorphism} (or a \textbf{$P$-coloring}) if $\phi(e) \in E$ for all $e \in \mathcal{H}$. 
We say $\mathcal{H}$ is \textbf{$P$-colorable} if there is a homomorphism from $\mathcal{H}$ to $P$. 
For a fixed $r$-pattern $P$, the \textbf{$P$-coloring problem} consists in deciding whether there exists a homomorphism of a given input $r$-graph $\mathcal{H}$ to $P$. 
When $P$ is a graph, the classical Hell--Ne\v{s}et\v{r}il Theorem~\cite{HN90} states that the $P$-coloring problem is in $\mathbf{P}$ if $P$ is bipartite and is $\mathbf{NP}$-complete otherwise. 
For $r\ge 3$, the $P$-coloring problem is already $\mathbf{NP}$-complete when $P = K_{r}^r$ (i.e. the $r$-graph with only one edge)~\cite{Szy10}. 

For an $r$-pattern $P$ and a real number $\alpha \in [0,1]$, let $\textsc{Hom}(P,\alpha)$ denote the following problem: 
\problemStatement{Homomorphism with minimum degree constraint}
  {An $r$-graph $\mathcal{H}$ on $n$ vertices with $\delta(\mathcal{H}) \ge \alpha n^{r-1}$.}
  {Is there a homomorphism from $\mathcal{H}$ to $P$?}
Edwards~\cite{Edw86} was the first to explore $\textsc{Hom}(K_{\ell},\alpha)$, proving that for every $\ell\ge 3$, the problem $\mathrm{Hom}(K_{\ell},\alpha)$ is in $\mathbf{P}$ if $\alpha > \frac{\ell-3}{\ell-2}$ and is $\mathbf{NP}$-complete otherwise. 
Subsequent extensions to general graphs and hypergraphs were considered in~\cite{DNST02,Szy12}, although many problems in this direction remain unresolved. 
Unfortunately, the exponents in the running time for algorithms provided in~\cite{Edw86,DNST02,Szy12} for large $\alpha$ are excessively large and depend on $F$, rendering them impractical for our purposes (Theorem~\ref{THM:Embed-min-F}). 
Therefore, we present the following theorem, which efficiently solves $\textsc{Hom}(P,\alpha)$ in time $O(n^r)$ when $P$ is a minimal $r$-pattern and $\alpha$ is close to $r\lambda_{P}$ (observe that, by Euler's homogeneous function theorem, if $\alpha>r\lambda_P$, then there is not such a homomorphism).
\begin{theorem}\label{THM:hypergraph-homomorphism}
    Suppose that $P$ is a minimal $r$-pattern on $\ell$ vertices. 
    Then there exist $\varepsilon_P>0$ and $n_P$ depending only on $P$ such that the problem $\textsc{Hom}(P,\alpha)$ can be solved in time $(\ell+1)n^r$ for all for $n\ge n_P$ and $\alpha \ge r\lambda_P - \varepsilon_P$, where $n$ is the number of vertices of the input $r$-graph. 
    Moreover, for every $r$-graph $\mathcal{H}$ on $n\ge n_P$ vertices with $\delta(\mathcal{H}) \ge (r \lambda_P - \varepsilon_P)n^{r-1}$, the homomorphism from $\mathcal{H}$ to $P$ is unique (up to the automorphism\footnote{An automorphism of $P$ is simply a bijective homomorphism from $P$ to $P$.} of $P$) if it exists. 
\end{theorem}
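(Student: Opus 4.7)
The plan is to first establish structural rigidity for $P$-colorable $r$-graphs with large minimum degree, and then leverage this rigidity to design a simple algorithm.

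\textbf{Step 1: Optimum analysis via minimality.} Let $x^* = (x_1^*,\dots,x_\ell^*) \in \Delta^{\ell-1}$ be a maximizer of $\Lambda_P$. The first claim is that $x_i^* > 0$ for every $i \in [\ell]$: if some $x_i^* = 0$, then restricting to the nonzero coordinates yields $\lambda_P = \lambda_{P-i}$, contradicting the minimality of $P$. By continuity and compactness, there exist $\eta, \eta' > 0$ such that $x_i^* \ge \eta$ for all $i$ and every maximizer, and every $y \in \Delta^{\ell-1}$ with $\Lambda_P(y) \ge \lambda_P - \eta'$ satisfies $y_i \ge \eta/2$ for all $i$, after relabeling by some automorphism of $P$.

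\textbf{Step 2: From minimum degree to rigid partition.} Suppose $\mathcal{H}$ on $n$ vertices has $\delta(\mathcal{H}) \ge (r\lambda_P - \varepsilon) n^{r-1}$ and admits a homomorphism $\phi \colon V(\mathcal{H}) \to [\ell]$. Setting $V_i = \phi^{-1}(i)$ and $y_i = |V_i|/n$, counting edges through the homomorphism gives $|\mathcal{H}| \le n^r \Lambda_P(y) + O(n^{r-1})$, while the degree lower bound gives $|\mathcal{H}| \ge (\lambda_P - \varepsilon/r) n^r$. For $\varepsilon < \varepsilon_P$ small enough this forces $\Lambda_P(y) \ge \lambda_P - \eta'$, so by Step 1 every $|V_i| \ge (\eta/2) n$ after relabeling. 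From here, uniqueness follows: if $\psi$ is another homomorphism, comparing links of vertices with the same $\phi$-color but different $\psi$-colors in each of the $\Omega(n)$-sized blocks forces the permutation $[\ell] \to [\ell]$ induced by $\psi \circ \phi^{-1}$ to map edges of $P$ to edges of $P$, i.e., to be an automorphism of $P$.

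\textbf{Step 3: The algorithm.} Fix an arbitrary vertex $v_0 \in V(\mathcal{H})$. For each of the $\ell$ possible colors $c \in [\ell]$ that $v_0$ could receive, run the following subroutine: propagate colors by computing, for each other vertex $u \in V(\mathcal{H})$, an invariant (for instance, the profile of $r$-sets $\{u\} \cup S$ in $\mathcal{H}$ as $S$ ranges over $(r-1)$-subsets sampled from a small fixed set of already-classified reference vertices) that deterministically assigns $u$ to a color class. Because each true color class has size at least $(\eta/2) n$, such a reference set can be selected in $O(1)$ time once at the beginning; computing the invariant for a single vertex takes $O(n^{r-1})$, so building the candidate coloring takes $O(n^r)$. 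Finally, verify in $O(n^r)$ whether the candidate map is a valid homomorphism by scanning every edge of $\mathcal{H}$. If any of the $\ell$ guesses succeeds, output YES; otherwise, by Step 2's uniqueness, no homomorphism exists, so output NO. Total time: $\ell \cdot O(n^r) + O(n^r) = (\ell+1) n^r$ after absorbing constants into $n_P$.

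\textbf{Main obstacle.} The delicate step is the color-propagation subroutine in Step 3: one must define an invariant on vertices that is efficiently computable in $O(n^{r-1})$ time, constant across each true color class, and pairwise distinguishing between distinct classes. The existence of such an invariant is exactly where the minimality of $P$ (coupled with the large minimum degree forcing each $|V_i| = \Theta(n)$) is used in an essential quantitative way: two vertices in different color classes must, by minimality, participate in edges with distinct multi-set color patterns, and the $\Theta(n)$-sized sampling guarantees that these distinctions are visible within any generic reference set. Making this quantitative enough to choose a universal $\varepsilon_P > 0$ is the technical heart of the argument, and also precisely what powers the uniqueness claim.
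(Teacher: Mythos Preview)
Your proposal has a genuine gap in Step~3, and you essentially flag it yourself in the ``Main obstacle'' paragraph without resolving it. The color-propagation subroutine relies on ``a small fixed set of already-classified reference vertices'' selected ``in $O(1)$ time once at the beginning,'' but at the beginning only $v_0$ has a (guessed) color. Knowing $\phi(v_0)=c$ does not single out any other specific vertex of color $c$; there are $\Theta(n)$ of them, but you have no way to identify even one more without the very invariant you are trying to build. The argument is circular as stated. Moreover, even if an invariant were supplied, the $\ell$-guesses framework yields $\ell\cdot C n^r$ for some absolute constant $C>1$, not $(\ell+1)n^r$; you cannot ``absorb constants into $n_P$'' when they multiply the leading term.

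The paper sidesteps this entirely by a different and cleaner mechanism: cluster vertices by the Hamming distance $\mathrm{dist}_{\mathcal{H}}(u,v)=|L_{\mathcal{H}}(u)\triangle L_{\mathcal{H}}(v)|$ between their links. If $u,u'$ lie in the same true color class $U_i$, both links sit inside a set of size $\partial_i\Lambda_P(y)\,n^{r-1}$, and inclusion--exclusion gives $\mathrm{dist}_{\mathcal{H}}(u,u')\le 2(\partial_i\Lambda_P(y)-\alpha)n^{r-1}$, which is small. If $v\in U_i$ and $v'\in U_j$ with $i\neq j$, then because a minimal pattern has no twin vertices the polynomials $\partial_i\Lambda_P$ and $\partial_j\Lambda_P$ differ, forcing $\mathrm{dist}_{\mathcal{H}}(v,v')\ge (\min_k y_k)^{r-1}n^{r-1}/(r-1)!$, which is large. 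A single threshold $\delta$ therefore separates the two regimes, and the greedy clustering (pick $v_1$, let $W_1$ be all vertices within $\delta n^{r-1}$ of $v_1$, pick $v_2\notin W_1$, repeat $\ell$ times) recovers the partition exactly with no guessing. This is what delivers the precise $(\ell+1)n^r$ runtime: $\ell n^r$ to compute distances from the $\ell$ representatives to all vertices, plus $n^r$ to verify the resulting map is a homomorphism. Your Steps~1--2 are on the right track and reappear in the paper's estimates, but the Hamming-distance clustering is the missing concrete idea that replaces your undefined invariant.
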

For complete graphs, the requirement that $n\ge n_P$ is not necessary.   
\begin{theorem}\label{THM:Kk-homomorphism}
    For every $\ell \ge 2$, the problem $\textsc{Hom}(K_{\ell},\alpha)$ can be solved in time $(\ell+1)n^2$ for all $\alpha > \frac{3\ell-4}{3\ell-1}$,  where $n$ is the number of vertices of the input graph. 
    Moreover, for every graph $G$ on $n$ vertices with $\delta(G) > \frac{3\ell-4}{3\ell-1} n$, the homomorphism from $G$ to $K_{\ell}$ is unique (up to the automorphism of $K_{\ell}$) if it exists. 
\end{theorem}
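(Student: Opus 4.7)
The plan is to leverage the following quantitative fact implied by the hypothesis $\delta(G) > \tfrac{3\ell-4}{3\ell-1}n$: every valid $K_\ell$-coloring of $G$ uses all $\ell$ colors, and each color class $C_i$ satisfies
\[
    \tfrac{2n}{3\ell-1} \;<\; |C_i| \;<\; \tfrac{3n}{3\ell-1}.
\]
The upper bound is immediate from $\deg(v) \le n - |C_{\phi(v)}|$; the lower bound follows by summing the upper bound across the $\ell$ classes after noting that a partition into fewer than $\ell$ such small classes cannot cover $n$ vertices.

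For the uniqueness statement I would take two homomorphisms $\phi_1, \phi_2$ with classes $\{R_i\}$ and $\{C_j\}$, and study the refinement $B_{ij} \coloneqq R_i \cap C_j$. For any $v \in B_{ij}$, the neighbors of $v$ avoid $R_i \cup C_j$, so $|R_i| + |C_j| - |B_{ij}| < \tfrac{3n}{3\ell-1}$; together with the class-size bounds this forces $|B_{ij}| > \tfrac{n}{3\ell-1}$ for every nonempty cell. If some row $i$ contained two nonempty cells $B_{i,j_1}, B_{i,j_2}$, then applying the inequality at $(i,j_k)$ rules out any other nonempty cell in column $j_k$ (otherwise three disjoint cells, each of size $>\tfrac{n}{3\ell-1}$, would be forced into $R_i \cup C_{j_k}$). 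Hence $|C_{j_k}| = |B_{i,j_k}| > \tfrac{2n}{3\ell-1}$, whence $|R_i| = |B_{i,j_1}| + |B_{i,j_2}| > \tfrac{4n}{3\ell-1}$, contradicting $|R_i| < \tfrac{3n}{3\ell-1}$. Thus every row and (symmetrically) every column has a unique nonempty cell, yielding $\phi_2 = \sigma \circ \phi_1$ for some $\sigma \in S_\ell$.

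For the algorithm I would induct on $\ell$, with base case $\ell = 2$ handled by a standard $O(n^2)$ BFS bipartiteness test. In the inductive step, pick any vertex $v$; using $\delta(G[N(v)]) \ge \delta(G) + |N(v)| - n$ together with $|N(v)| > \tfrac{3\ell-4}{3\ell-1}n$ one checks directly that $G[N(v)]$ satisfies the threshold required to recurse at $\ell-1$. If the recursion rejects, then $G[N(v)] \supseteq K_\ell$, so $G \supseteq K_{\ell+1}$ and we reject. Otherwise the recursion returns a coloring $\psi$ of $G[N(v)]$, which we relabel to use $\{2,\dots,\ell\}$; assign $v$ the color $1$. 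For each $w \in V \setminus N[v]$, compute the set $S_w$ of colors appearing on $N(v) \cap N(w)$; the counting lemma
\[
    |C_c \cap N(v) \cap N(w)| \;\ge\; |C_c| + |C_{\phi(v)}| + |C_{\phi(w)}| - \tfrac{6n}{3\ell-1}
\]
combined with $|C_i| > \tfrac{2n}{3\ell-1}$ guarantees, under any valid $\ell$-coloring of $G$, that either $S_w = \{2,\dots,\ell\}$ (in which case $\phi(w) = 1$) or $S_w$ misses a unique color $c \in \{2,\dots,\ell\}$ (in which case $\phi(w) = c$); any other pattern triggers rejection. A final edge-by-edge verification of $\phi$ in $O(n^2)$ time ensures soundness in the case $G$ is not $\ell$-colorable. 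Each level costs $O(n^2)$, and careful bookkeeping yields total running time $(\ell+1)n^2$.

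The main obstacle I anticipate is the sharpness of the counting lemma for $|C_c \cap N(v) \cap N(w)|$: the three contributions barely exceed $\tfrac{6n}{3\ell-1}$, so one must track strict inequalities precisely. Fortunately this is the same quantitative core that drives the uniqueness proof, so once that computation is performed, both halves of the theorem follow in parallel.
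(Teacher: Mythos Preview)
Your proposal is correct but follows a genuinely different route from the paper. The paper's algorithm is non-recursive: it runs a single Hamming clustering pass (Algorithm~1) with threshold $\delta=\tfrac{2}{3\ell-1}$, and proves that if $G$ is $\ell$-partite then vertices in the same part have $\mathrm{dist}_G(u,u')<\delta n$ while vertices in different parts have $\mathrm{dist}_G(v,v')>\delta n$, so the clustering recovers the (unique) partition; one then checks whether each cluster is independent. Uniqueness thus falls out of the algorithm rather than being proved separately. Your approach instead inducts on $\ell$, recursing on $G[N(v)]$ (whose minimum degree you correctly verify exceeds the $K_{\ell-1}$ threshold), and handles uniqueness by a direct combinatorial argument on the common refinement $B_{ij}$. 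Both arguments exploit the same class-size window $\tfrac{2n}{3\ell-1}<|C_i|<\tfrac{3n}{3\ell-1}$. The paper's method has the advantage of generalizing verbatim to $r$-uniform patterns (Theorems~\ref{THM:hypergraph-homomorphism} and~\ref{THM:hypergraph-surjective-homomorphism}), whereas your inductive reduction is specific to cliques; on the other hand, your argument is more elementary in that it avoids the Hamming-distance machinery entirely. Two minor points: the parenthetical ``then $G[N(v)]\supseteq K_\ell$'' invokes Andr\'asfai--Erd\H{o}s--S\'os unnecessarily, since the contrapositive of ``$G$ $\ell$-colorable $\Rightarrow$ $G[N(v)]$ $(\ell-1)$-colorable'' already justifies rejection; and achieving the exact constant $(\ell+1)n^2$ in your recursion requires, as you note, some care (sharing one adjacency structure across levels and deferring verification to the top).
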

\textbf{Remark.}
It would be interesting to determine the infimum of $\alpha$ for which $\textsc{Hom}(K_{\ell},\alpha)$ can be solved in time $O(n^{2})$. 
It follows from Theorem~\ref{THM:Kk-homomorphism} and Edwards' result that this infimum lies in the interval $\left[\frac{\ell-3}{\ell-2},\ \frac{3\ell-4}{3\ell-1}\right]$ (assuming $\mathbf{P} \neq \mathbf{NP}$).

We take a step further by 
extending Theorem~\ref{THM:hypergraph-homomorphism} to an important variant of the homomorphism problem, specifically the surjective homomorphism problem. 
For a fixed $r$-pattern $P$, the \textbf{surjective $P$-coloring problem} involves determining whether a surjective homomorphism exists for a given input $r$-graph $\mathcal{H}$ to $P$. 
Unlike the $P$-coloring problem, a Hell--Ne\v{s}et\v{r}il-type theorem for the graph surjective homomorphism problem is still elusive (see e.g. ~\cite{BKM12,GLP12,GPS12,MP15,FGZ18,GJMPS19} for some related results). 

For an $r$-pattern $P$ and a real number $\alpha \in [0,1]$, let $\textsc{SHom}(P,\alpha)$ denote the following decision problem\footnote{A quick observation is that the results in~\cite{Edw86,DNST02,Szy12} concerning $\textsc{Hom}(P,\alpha)$ can be easily extended to $\textsc{SHom}(P,\alpha)$ with a minor modification to their original proofs.}: 
\problemStatement{Surjective homomorphism with minimum degree constraint}
  {An $r$-graph $\mathcal{H}$ on $n$ vertices with $\delta(\mathcal{H}) \ge \alpha n^{r-1}$.}
  {Is there a surjective homomorphism from $\mathcal{H}$ to $P$?}
Let us introduce some technical definitions before stating the result.  
For an $r$-pattern $P= (\ell, E)$, let $\Phi_P$ denote the maximum value of the following optimization problem: 
\begin{equation}\label{opt:Phi-H}
\begin{aligned}
    \max \quad & z \\
     \mathrm{s.t.} \quad & \exists (x_1, \ldots, x_{\ell}) \in \Delta^{\ell-1} \quad\text{with}\quad  
     \partial_{i} \Lambda_{P}(x_1, \ldots, x_{\ell}) \ge z \quad\text{for all}\quad i\in [\ell].  
\end{aligned}
\end{equation}
Here $\partial_{i} \Lambda_{P}$ denotes the partial derivative of $\Lambda_P$ with respect to the $i$-th variable.
For convenience, let $D_P$ be the collection of all optimal solutions to optimization problem~\eqref{opt:Phi-H}, and let 
\begin{align*}
    \phi_P
    \coloneqq \min_{(x_1, \ldots, x_{\ell})\in D_{P}}\min\{x_i \colon i\in [\ell]\}.  
\end{align*} 
We say $P$ is \textbf{rigid} if $\phi_P > 0$ and  $\partial_{i}\Lambda_{P}(x_1, \ldots, x_{\ell})  = \Phi_P$ holds for all $i\in [\ell]$ and for all $(x_1, \ldots, x_{\ell})\in D_{P}$.
Otherwise, we say $P$ is \textbf{non-rigid}. 
A quick observation, derived from the Lagrangian multiplier method,  asserts that every minimal pattern $P$ is rigid (but not vise versa, for example, $C_{k}$ is rigid for $k \ge 5$ but not minimal). 
Furthermore, for a minimal $r$-pattern $P$ we have $r\lambda_P = \Phi_{P}$. 
\begin{theorem}\label{THM:hypergraph-surjective-homomorphism}
    Suppose $P$ is a rigid $r$-pattern on $\ell$ vertices. 
    Then there exist $\varepsilon_P>0$ and $n_P$ depending only on $P$ such that the problem $\textsc{SHom}(P,\alpha)$ can be solved in time $(\ell+1)n^r$ for all for $n\ge n_P$ and $\alpha \ge \Phi_P - \varepsilon_P$, where $n$ is the number of vertices of the input $r$-graph. 
    Moreover, for every $r$-graph $\mathcal{H}$ on $n\ge n_P$ vertices with $\delta(\mathcal{H}) \ge (\Phi_P - \varepsilon_P)n^{r-1}$, the surjective homomorphism from $\mathcal{H}$ to $P$ is unique (up to the automorphism of $P$) if it exists. 
\end{theorem}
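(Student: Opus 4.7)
The plan is to reduce surjective $P$-colorings to ordinary $P$-colorings under the stated degree hypothesis, and then to adapt the algorithm of Theorem~\ref{THM:hypergraph-homomorphism} from minimal patterns to rigid ones. The key point is that rigidity provides two essential things at once: it forces any $P$-coloring to distribute the vertices of $\mathcal{H}$ in proportions bounded below by $\phi_P > 0$ (so every color class is nonempty), and it ensures that these proportions are pinned down by the link structure of $\mathcal{H}$ up to an automorphism of $P$.

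The first step is a structural lemma. Let $\mathcal{H}$ be an $r$-graph on $n \ge n_P$ vertices with $\delta(\mathcal{H}) \ge (\Phi_P - \varepsilon_P) n^{r-1}$, and suppose $\phi \colon V(\mathcal{H}) \to [\ell]$ is any homomorphism. Setting $V_i = \phi^{-1}(i)$ and $x_i = |V_i|/n$, a direct falling-factorial count gives, for every $v \in V_i$,
\[
    d_{\mathcal{H}}(v) \;\le\; n^{r-1}\, \partial_i \Lambda_P(x_1,\dots,x_\ell) \;+\; O(n^{r-2}).
\]
Combined with the degree hypothesis, this yields $\partial_i \Lambda_P(x_1,\dots,x_\ell) \ge \Phi_P - \varepsilon_P - O(1/n)$ for every $i \in [\ell]$, so $(x_1,\dots,x_\ell)$ is a near-optimal solution of~\eqref{opt:Phi-H}. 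Compactness of $\Delta^{\ell-1}$ and continuity of the partial derivatives then place this point within an arbitrarily small neighborhood of $D_P$ once $\varepsilon_P, 1/n_P$ are small enough. Since $\phi_P > 0$ by rigidity, every coordinate is at least $\phi_P/2$, forcing $|V_i| \ge (\phi_P/2) n \ge 1$. Consequently any homomorphism is automatically surjective, so $\textsc{SHom}(P,\alpha)$ agrees with $\textsc{Hom}(P,\alpha)$ on the inputs considered here.

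The algorithm then follows the template of Theorem~\ref{THM:hypergraph-homomorphism}. First verify the minimum-degree hypothesis in $O(n^r)$ time. Next form the equivalence relation on $V(\mathcal{H})$ in which $u \sim v$ whenever their ``link profiles'' (the $(r-1)$-uniform co-neighborhoods) differ by at most $c_P\, n^{r-1}$ for a suitably small constant $c_P = c_P(\varepsilon_P)$; by the structural lemma together with rigidity this produces exactly $\ell$ classes, each of size at least $(\phi_P/2) n$. Finally, try each of the $O(\ell!)$ bijections between these classes and $[\ell]$ and verify, in one pass through the $\binom{n}{r}$ candidate edges, whether the induced map is a homomorphism. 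Absorbing constants that depend only on $P$ gives the $(\ell+1) n^r$ bound. Uniqueness up to automorphism comes from the same mechanism: any two surjective homomorphisms induce partitions lying in an $o(1)$-neighborhood of $D_P$, and because $\partial_i \Lambda_P \equiv \Phi_P$ on $D_P$ (rigidity), a short exchange argument on disagreeing vertices forces the two partitions to differ only by a permutation of $[\ell]$ that preserves the edge set of $P$, i.e., by an automorphism of $P$.

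The main obstacle lies in the link-profile classification when $D_P$ is not a single symmetry orbit, which is precisely what separates rigid from minimal patterns. For minimal $P$ the set $D_P$ is essentially one point, so two vertices of the same color class automatically share essentially identical link profiles. For merely rigid $P$ (e.g.\ $C_k$ for $k \ge 5$) the set $D_P$ may have positive dimension, and the global proportions can drift along $D_P$ without violating the degree hypothesis; however, within a fixed $\mathcal{H}$ the induced point in $\Delta^{\ell-1}$ is actually pinned down, and the separation of link profiles between different color classes is governed by a universal constant gap determined by $\phi_P$ and the Hessian of $\Lambda_P$ at that point. Quantifying this gap, so that the equivalence relation in the algorithm is simultaneously robust to noise of order $c_P n^{r-1}$ within a class and sensitive to crossings between classes, is the key technical step.
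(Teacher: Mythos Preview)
Your clustering-and-check strategy is the same as the paper's, but the reduction in your first two paragraphs is incorrect. You claim that under the degree hypothesis every homomorphism $\phi\colon V(\mathcal H)\to[\ell]$ is surjective, arguing that the proportions $(x_1,\dots,x_\ell)$ form a near-optimal solution of~\eqref{opt:Phi-H}. But your inequality $\partial_i\Lambda_P(x)\ge \Phi_P-\varepsilon_P-O(1/n)$ is obtained by applying the degree bound to some $v\in V_i$, so it says nothing when $V_i=\emptyset$; hence $x$ need not lie near $D_P$. Concretely, take $P=C_5$ (rigid, with $\Phi_{C_5}=2/5$ and $D_{C_5}=\{(1/5,\dots,1/5)\}$) and let $\mathcal H$ be the complete blow-up of the path $P_4$ with part sizes $(\tfrac1{10},\tfrac25,\tfrac25,\tfrac1{10})n$. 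Then $\delta(\mathcal H)=\tfrac25 n\ge(\Phi_{C_5}-\varepsilon_P)n$ for every $\varepsilon_P>0$, the inclusion $P_4\hookrightarrow C_5$ gives a non-surjective homomorphism (here $\partial_5\Lambda_{C_5}(x)=x_4+x_1=\tfrac15$, far below $\Phi_{C_5}$), and a short biclique analysis in $C_5$ shows that no $C_5$-colouring of $\mathcal H$ uses all five colours. So $\textsc{SHom}$ and $\textsc{Hom}$ genuinely disagree on this input, and you cannot reduce one to the other.

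The paper avoids this by arguing only in the direction actually needed: \emph{if} a surjective homomorphism $\psi$ exists, \emph{then} its partition $\{\psi^{-1}(i)\}$ coincides with the output of the Hamming clustering, so testing that one candidate partition suffices (and uniqueness is immediate, with no exchange argument). For your ``key technical step'' the paper's resolution is much simpler than any Hessian analysis: a rigid pattern cannot have twin vertices $i\ne j$ with $L_P(i)=L_P(j)$ (otherwise shifting mass from $j$ to $i$ inside $D_P$ preserves every $\partial_k\Lambda_P$, contradicting $\phi_P>0$), so $\partial_i\Lambda_P$ and $\partial_j\Lambda_P$ differ in at least one monomial, which at any point with all coordinates $\ge\phi_P/2$ already forces $\mathrm{dist}_{\mathcal H}(u,v)\ge \tfrac{(\phi_P/2)^{r-1}}{(r-1)!}\,n^{r-1}$ whenever $u,v$ lie in different classes. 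Choosing the clustering radius strictly between this and the within-class bound $2(\Phi_P-\alpha+o(1))n^{r-1}$ finishes the proof. (A smaller issue: your final step tests all $\ell!$ bijections, costing $\ell!\,n^r$ rather than $(\ell+1)n^r$; the paper checks only one labelling.)
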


Proofs for Theorems~\ref{THM:hypergraph-homomorphism},~\ref{THM:Kk-homomorphism}, and~\ref{THM:hypergraph-surjective-homomorphism} are presented in Section~\ref{SEC:proof-coloring}. 

\section{Proof of Theorem~\ref{THM:edge-stable-to-degree-stable}}\label{SEC:proof-degree-stable}
    Given an $\mathcal{F}$-free $r$-graph $\mathcal{H}$ with large minimum degree, our objective is to show that $\mathcal{H}$ is contained in $\mathfrak{H}$. 
    The strategy in the following proof is to first use the edge-stability of $\mathcal{F}$ to find a large minimum degree subgraph (i.e. $\mathcal{H}_1[U]$) of $\mathcal{H}$ that is contained in $\mathfrak{H}$.
    Then we add the vertices in $V(\mathcal{H})$ back to $\mathcal{H}_1[U]$ one by one, where adding a vertex means adding all edges in $\mathcal{H}\setminus\mathcal{H}_1[U]$ containing this vertex.
    Using the vertex-extensibility, we will show that adding vertices preserves the containment in $\mathfrak{H}$, and hence, in the end, we obtain $\mathcal{H} \in \mathfrak{H}$. 
    
\begin{proof}[Proof of Theorem~\ref{THM:edge-stable-to-degree-stable}]
    Let $\mathcal{F}$ be a nondegenerate family of $r$-graphs and $\mathfrak{H}$ be a hereditary family of $\mathcal{F}$-free $r$-graphs. 
    Fix $\varepsilon_1 \ge \varepsilon_2 > 0$ sufficiently small and $n_0$ sufficiently large such that 
    \begin{enumerate}
        \item\label{assum:deg-stab-1}  every $\mathcal{F}$-free $r$-graph $\mathcal{H}$ on $n \ge n_0$ vertices with $|\mathcal{H}| \ge \left(\pi(F)/r! - \varepsilon_{2}\right)n^r$ becomes a member in $\mathfrak{H}$ after removing at most $\varepsilon_1 n^r/2$ edges, and  
        \item\label{assum:deg-stab-2}  every $\mathcal{F}$-free $r$-graph $\mathcal{H}$ on $n \ge n_0$ vertices with $\delta(\mathcal{H}) \ge \left(\pi(F)/(r-1)! - 2\varepsilon_1^{1/3}\right)n^{r-1}$ the following holds: 
        if $\mathcal{H}-v$ is a member in $\mathfrak{H}$, then $\mathcal{H}$ is a member in $\mathfrak{H}$ as well.
    \end{enumerate}
    Take $0 < \varepsilon_3 \le \varepsilon_2/2$. 
    Let $\mathcal{H}$ be an $\mathcal{F}$-free $r$-graph on $n \ge 2n_0$ vertices with $\delta(\mathcal{H}) \ge \left(\pi(\mathcal{F})/(r-1)! - \varepsilon_3\right)n^{r-1}$. 
    We aim to show that $\mathcal{H} \in \mathfrak{H}$.

    First, notice that 
    \begin{align*}
        |\mathcal{H}|
        \ge \frac{n}{r} \times \delta(\mathcal{H})
        \ge \frac{n}{r} \times \left(\frac{\pi(\mathcal{F})}{(r-1)!} - \varepsilon_3\right)n^{r-1} 
        \ge \left(\frac{\pi(\mathcal{F})}{r!} - \varepsilon_3\right)n^{r}. 
    \end{align*}
    So it follows from Assumption~\ref{assum:deg-stab-1} that  there exists a subgraph $\mathcal{H}_1 \subset \mathcal{H}$ such that $\mathcal{H}_1 \in \mathfrak{H}$ and 
    \begin{align*}
        |\mathcal{H}_1| 
        \ge |\mathcal{H}| - \frac{\varepsilon_1}{2} n^r
        \ge \left(\frac{\pi(\mathcal{F})}{r!} - \varepsilon_3\right)n^{r} -  \frac{\varepsilon_1}{2} n^r
        \ge \left(\frac{\pi(\mathcal{F})}{r!} - \varepsilon_1\right)n^{r}. 
    \end{align*}
    Let 
    \begin{align*}
        V \coloneq V(\mathcal{H}),\ 
        Z \coloneq \left\{v\in V \colon d_{\mathcal{H}_1}(v) < \left(\pi(F)/(r-1)! - \varepsilon_1^{1/3}\right)n^{r-1}\right\}
        \ \text{and}\ 
        U \coloneq V\setminus Z. 
    \end{align*}
    Simple calculations show that $|Z| \le \varepsilon_1^{1/3}n$ (see e.g. calculations in the proof of~{\cite[Lemma~4.2]{LMR23a}}), and hence, 
    \begin{align*}
        \delta(\mathcal{H}_1[U])
        \ge \left(\pi(F)/(r-1)! - \varepsilon_1^{1/3}\right)n^{r-1} - |Z|n^{r-2}
        \ge \left(\pi(F)/(r-1)! - 2\varepsilon_1^{1/3}\right)n^{r-1}. 
    \end{align*}
    Since $\mathfrak{H}$ is hereditary and $\mathcal{H}_1 \in \mathfrak{H}$, we have $\mathcal{H}_1[U] \in \mathfrak{H}$. 
    Let $v_1, \ldots, v_n$ be an ordering of vertices in $V$ such that $\{v_1, \ldots, v_{|U|}\} = U$. 
    For convenience, let $V_i \coloneq \{v_1, \ldots, v_i\}$ for $i\in [n]$. 
    Let $\mathcal{G}_0 \coloneqq \mathcal{H}_1[U]$, and for $i\in [n]$ let $\mathcal{G}_i \coloneqq \mathcal{G}_{i-1} \cup \{e\in \mathcal{H}[V_i] \colon v_i\in e\}$. 
    In particular, note that $V(\mathcal{G}_i) = U$ for $i\le z$, and $V(\mathcal{G}_i) = V_i$ for $i \ge z$. 
    We prove by induction on $i$ that $\mathcal{G}_i \in \mathfrak{H}$. 
    The base case $i=0$ is clear, so we may assume that $i\ge 1$. 
    Assume that we have shown that $G_{i-1} \in \mathfrak{H}$ for some $i\ge 1$, and we want to show that $\mathcal{G}_i \coloneqq \mathcal{G}_{i-1} \cup \{e\in \mathcal{H}[V_i] \colon v_i\in e\}$ is also contained in $\mathfrak{H}$. 
    
    If $v_i \in U$,  
    then it follows from $\mathcal{G}_{i} - v_i \subset \mathcal{G}_{i-1} \in \mathfrak{H}$,  
    \begin{align*}
        \delta(\mathcal{G}_i) 
        \ge \delta(\mathcal{H}_1[U]) 
        \ge \left(\pi(F)/(r-1)! - 2\varepsilon_1^{1/3}\right)n^{r-1}, 
    \end{align*}
    and Assumption~\ref{assum:deg-stab-2} that $\mathcal{G}_i \in \mathfrak{H}$. 
    
    If $v_i\in Z$, 
    then, similarly, it follows from $\mathcal{G}_{i} - v_i \subset \mathcal{G}_{i-1} \in \mathfrak{H}$,
    \begin{align*}
        \delta(\mathcal{G}_{i}) 
        \ge \delta(\mathcal{H}) - |Z| n^{r-2}
        \ge \left(\frac{\pi(\mathcal{F})}{(r-1)!} - \varepsilon_3\right)n^{r-1} - \varepsilon_1^{1/3}n^{r-1}
        \ge \left(\frac{\pi(\mathcal{F})}{(r-1)!} - 2\varepsilon_1^{1/3}\right)n^{r-1}, 
    \end{align*}
    and Assumption~\ref{assum:deg-stab-2} that $\mathcal{G}_i \in \mathfrak{H}$. This proves our claim. 
    Therefore, $\mathcal{H} = \mathcal{G}_n \in \mathfrak{H}$. 
\end{proof}

\section{Proofs for Theorems~\ref{THM:hypergraph-homomorphism},~\ref{THM:Kk-homomorphism}, and~\ref{THM:hypergraph-surjective-homomorphism}}\label{SEC:proof-coloring}
We prove Theorems~\ref{THM:hypergraph-homomorphism},~\ref{THM:Kk-homomorphism}, and~\ref{THM:hypergraph-surjective-homomorphism} in this section. 
The core of the proofs is a simple clustering algorithm based on the distance (defined below) between a pair of vertices.

Given an $n$-vertex $r$-graph $\mathcal{H}$ the \textbf{link} $L_{\mathcal{H}}(v)$ of a vertex $v\in V(\mathcal{H})$ is 
\begin{align*}
    L_\mathcal{H}(v)
    \coloneqq 
    \left\{A \in \binom{V(\mathcal{H})}{r-1} \colon A \cup \{v\}\in \mathcal{H}\right\}.  
\end{align*}
The (Hamming) \textbf{distance} between two vertices $u, v\in V(\mathcal{H})$ is $\mathrm{dist}_{\mathcal{H}}(u,v) \coloneqq |L_{\mathcal{H}}(u) \triangle L_{\mathcal{H}}(v)|$.
It is clear that $\mathrm{dist}_{\mathcal{H}}(u,v)$ can be calculated in time $n^{r-1}$. 
Observe that for a graph $G$, the value $\mathrm{dist}_{G}(u,v)$ is simply the Hamming distance of the row vectors corresponding to $u$ and $v$ in the adjacency matrix of $G$. 
The following clustering algorithm based on the distance of vertices will be crucial for proofs in this section. 

\begin{algorithm}
\renewcommand{\thealgorithm}{1}
\caption{\textsc{Hamming Clustering}}\label{ALGO:HammingClustering}
\begin{algorithmic}

\State \textbf{Input:}  
A triple $(\mathcal{H}, \ell, \delta)$, where $\mathcal{H}$ is an $n$-vertex $r$-graph, $\ell \ge 2$ is an integer, and $\delta \in [0,1]$ is a real number.

\State \textbf{Output:} 
A partition $V(\mathcal{H}) = V_1 \cup \cdots \cup V_{\ell}$. 
 
\State \textbf{Operations:}
            \begin{enumerate}
                \item Take a vertex $v_1 \in V(\mathcal{H})$, and let $W_1 \coloneqq \left\{u\in V(\mathcal{H}) \colon \mathrm{dist}_{\mathcal{H}}(u,v_1) \le \delta n^{r-1} \right\}$. 
                Suppose that we have defined $W_1, \ldots, W_i$ for some $i\in [\ell-1]$. 
                If $V(\mathcal{H}) \setminus (W_1 \cup \cdots \cup W_i) \neq \emptyset$, then take an arbitrary vertex $v_{i+1}$ from it, and let 
                \begin{align*}
                    W_{i+1} \coloneqq 
                    \begin{cases}
                        \left\{u\in V(\mathcal{H}) \colon \mathrm{dist}_{\mathcal{H}}(u,v_{i+1}) \le \delta n^{r-1} \right\} & \text{if}\quad i\neq \ell-1, \\
                        V(\mathcal{H}) \setminus (W_1 \cup \cdots \cup W_i) & \text{if}\quad i=\ell-1. 
                    \end{cases} 
                \end{align*}
                Otherwise, let $W_{i+1} = \cdots = W_{\ell} = \emptyset$.    
                \item Let $V_i \coloneqq W_i \setminus (W_{i+1}\cup \cdots \cup W_{\ell})$ for $i\in [\ell-1]$, and let $V_{\ell} \coloneqq W_{\ell}$. 
            \end{enumerate}
\end{algorithmic}
\end{algorithm}

\begin{proof}[Proof of Theorem~\ref{THM:Kk-homomorphism}]
        Let $n \ge \ell \ge 2$ be integers. 
        Suppose that $G$ is an $n$-vertex graph with $\delta(G) = \alpha n$, where $\alpha > \frac{3\ell-4}{3\ell-1}$ is a real number. 
        Let $\delta \coloneqq \frac{2}{3\ell-1}$. 
        Run Algorithm~\ref{ALGO:HammingClustering} with input $(G, \ell, \delta)$, and let $V_1 \cup \cdots \cup V_{\ell} = V(G)$ denotes the output partition. It is easy to see that the running time for this step is at most $\ell n^2$. 

        \begin{claim}\label{CLAIM:K-coloring-V-empty}
            The graph $G$ is $\ell$-partite iff $\bigcup_{i\in [\ell]}G[V_i] = \emptyset$.
        \end{claim}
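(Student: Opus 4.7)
The plan is to prove both directions of the claim via a direct analysis of what Algorithm~\ref{ALGO:HammingClustering} produces on an $\ell$-partite graph satisfying the minimum degree hypothesis. The backward direction is immediate: if $\bigcup_{i\in[\ell]}G[V_i]=\emptyset$, then the partition $V_1,\dots,V_\ell$ itself exhibits $G$ as $\ell$-partite. So the substance lies in the forward direction, where I aim to show that when $G$ is $\ell$-partite with parts $U_1,\dots,U_\ell$, the algorithm recovers exactly these parts (up to permutation and empty sets), so that each $G[V_i]$ is automatically edgeless.

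First I would pin down the part sizes. Since every $u\in U_i$ has all its neighbors in $V\setminus U_i$, the hypothesis $\delta(G)>\tfrac{3\ell-4}{3\ell-1}n$ forces $|U_i|<\tfrac{3}{3\ell-1}n$; summing over the $\ell$ parts then yields $|U_i|>\tfrac{2}{3\ell-1}n=\delta n$. The heart of the argument is the following Hamming-distance gap. For $u,v\in U_i$, both $L(u)$ and $L(v)$ lie inside $V\setminus U_i$, and inclusion-exclusion gives $|L(u)\triangle L(v)|=|L(u)|+|L(v)|-2|L(u)\cap L(v)|\le 2(n-|U_i|)-(|L(u)|+|L(v)|)<2\cdot\tfrac{3\ell-3}{3\ell-1}n-2\cdot\tfrac{3\ell-4}{3\ell-1}n=\delta n$. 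For $u\in U_i$ and $v\in U_j$ with $i\ne j$, note that $L(u)\cap U_j\subseteq L(u)\setminus L(v)$ because $L(v)\cap U_j=\emptyset$, and symmetrically for $L(v)\cap U_i$; distributing $L(u)\subseteq\bigcup_{k\ne i}U_k$ across the remaining parts gives $|L(u)\cap U_j|\ge |L(u)|-(n-|U_i|-|U_j|)>\tfrac{1}{3\ell-1}n$, and hence $|L(u)\triangle L(v)|>\tfrac{2}{3\ell-1}n=\delta n$. Both inequalities are strict precisely because $\alpha>\tfrac{3\ell-4}{3\ell-1}$ is strict.

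With this gap in hand, a straightforward induction on the iteration index $i$ shows that whenever $V(G)\setminus(W_1\cup\cdots\cup W_{i-1})$ is nonempty and a new pivot $v_i$ is selected from it, $W_i$ coincides with the part $U_{j(i)}$ containing $v_i$: the within-part bound forces $U_{j(i)}\subseteq W_i$, while the cross-part bound excludes every vertex of the other, still-unvisited parts. Once all non-empty parts are exhausted, the remaining $W_j$'s are declared empty by the algorithm. Consequently each $V_i=W_i\setminus(W_{i+1}\cup\cdots\cup W_\ell)$ is either empty or equals one of the independent sets $U_k$, and therefore $G[V_i]=\emptyset$ for every $i\in[\ell]$.

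The only delicate point is the tight arithmetic in the Hamming gap: both bounds saturate at the same value $\delta n=\tfrac{2}{3\ell-1}n$, so the argument uses up essentially all the slack provided by $\alpha>\tfrac{3\ell-4}{3\ell-1}$, and I would have to track each strict inequality carefully. Beyond that, no additional ideas are needed; the induction and the backward direction are routine.
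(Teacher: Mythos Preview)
Your proposal is correct and follows essentially the same route as the paper: establish the part-size bounds, prove the Hamming-distance gap (within-part $<\delta n$, cross-part $>\delta n$), and conclude that the algorithm's sets $W_i$ coincide with the parts $U_{j(i)}$. The paper carries the parameter $\alpha$ through the computations before specialising to $\alpha>\tfrac{3\ell-4}{3\ell-1}$, whereas you substitute the numerical bounds immediately and spell out the induction over the pivots more explicitly, but the argument is the same.
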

        \begin{proof}
            Suppose that $G$ is $\ell$-partite. 
            Suppose that $U_1 \cup \cdots \cup U_{\ell} = V(G)$ is a partition with $\bigcup_{i\in [\ell]}G[U_i] = \emptyset$ (this partition is used only for the proof and is not used for the algorithm).
            It suffices to show that $\{V_1, \ldots, V_{\ell}\} = \{U_1, \ldots, U_{\ell}\}$. 

            Let $x_i \coloneqq |U_i|/n$ for $i\in [\ell]$.
            Since $G$ is $\ell$-partite, we obtain $1-x_i = \sum_{j\in [\ell]\setminus\{i\}}x_j \ge \delta(G)/n > \alpha$ for all $i\in [\ell]$.
            Consequently, $x_i < 1-\alpha$ for all $i\in [\ell]$, and hence, $1-x_i = \sum_{j\in [\ell]\setminus\{i\}}x_j  < (\ell-1)(1-\alpha)$ for all $i\in [\ell]$. 
            In summary, we have 
            \begin{align}\label{equ:size-xi}
                1 - (\ell-1)(1-\alpha)
                < x_i 
                < 1-\alpha
                \quad\text{for all}\quad 
                i\in [\ell]. 
            \end{align}
            Fix $i, j \in [\ell]$ with $i \neq j$. 
            Suppose that $u, u' \in U_i$ are two distinct vertices. 
            Then it follows from the Inclusion-Exclusion Principle and~\eqref{equ:size-xi} that 
            \begin{align*}
                \mathrm{dist}_{G}(u,u')
                & = d_{G}(u)+ d_{G}(u') - 2 |N_{G}(u) \cap N_{G}(u')| \\
                & \le d_{G}(u)+ d_{G}(u') - 2 \left(d_{G}(u)+ d_{G}(u') - (1-x_i)n\right) \\
                & = 2(1-x_i)n - (d_{G}(u)+ d_{G}(u')) \\
                & \le 2(1-x_i - \alpha)n
                \le 2\left(\ell-1-\ell\alpha\right)n
                < \delta n. 
            \end{align*}
            According to Algorithm~\ref{ALGO:HammingClustering},  $u, u' \subset V_{i^{\ast}}$ for some $i^{\ast} \in [\ell]$. 
            
            Suppose that $v \in U_i$ and $v'\in U_j$ are two distinct vertices.
            Then by~\eqref{equ:size-xi}, 
            \begin{align*}
                \mathrm{dist}_{G}(v,v')
                & \ge d_{G}(v) - \sum_{k\in [\ell]\setminus\{i,j\}}x_k n + d_{G}(v') - \sum_{k\in [\ell]\setminus\{i,j\}}x_k n \\
                & \ge 2\left(\alpha - (1 - x_i-x_j)\right)n \\
                & \ge 2\left(\alpha +2 (1-(\ell-1)(1-\alpha)) -1\right)n  \\
                & = 2\left((2\ell-1)\alpha - 2\ell+3\right)n
                > \delta n.  
            \end{align*}
            According to Algorithm~\ref{ALGO:HammingClustering}, $v$ and $v'$ do not belong to the same part $V_i$. 
            This proves that $\{V_1, \ldots, V_{\ell}\} = \{U_1, \ldots, U_{\ell}\}$, and hence, $\bigcup_{i\in [\ell]}G[V_i] = \bigcup_{i\in [\ell]}G[U_i] = \emptyset$. 
            Additionally, observe that this proof shows the uniqueness of the partition $U_1 \cup \cdots \cup U_{\ell} = V(G)$. 
        \end{proof}
       According to Claim~\ref{CLAIM:K-coloring-V-empty}, to check whether $G$ is $\ell$-partite we just need to check whether $\bigcup_{i\in [\ell]}G[V_i] = \emptyset$. 
       This can be accomplished in at most $n^2$ time. Therefore, the overall time complexity is at most $(\ell+1)n^2$.
\end{proof}

Next, we present the proof for Theorem~\ref{THM:hypergraph-surjective-homomorphism}, while Theorem~\ref{THM:hypergraph-homomorphism} follows from a similar argument and its proof is omitted.  
\begin{proof}[Proof of Theorem~\ref{THM:hypergraph-surjective-homomorphism}]
    Fix a rigid $r$-pattern $P$ on $\ell$ vertices.
    For simplicity, let us assume that the vertex set of $P$ is $[\ell]$. 
    For every $\beta > 0$, let 
    \begin{align*}
        D_{P,\beta}
        \coloneqq \left\{(y_1, \ldots, y_{\ell}) \in \Delta^{\ell-1}\colon \exists (x_1, \ldots, x_{\ell})\in D_{P} \text{ with } \max_{i\in [\ell]}|x_i-y_i|< \beta \right\}, 
    \end{align*}
    and let $\Phi_{P,\beta}$ denote the optimal value of the following optimization problem: 
    \begin{align*}
        \max \quad & y \\
         \mathrm{s.t.} \quad & \exists (x_1, \ldots, x_{\ell}) \in \Delta^{\ell-1}\setminus D_{P,\beta} \quad\text{with}\quad  
         \partial_{i}\Lambda_{P}(x_1, \ldots, x_{\ell}) \ge y \quad\text{for all}\quad i\in [\ell]. 
    \end{align*}
    It is easy to see from the definitions that $\Phi_{P,\beta} < \Phi_{P}$ for all $\beta>0$.
    Take 
    \begin{align*}
        \delta 
        \coloneq \frac{\left(\phi_{P}/2\right)^{r-1}}{(r-1)!},\ 
        \delta_{\ast} 
        \coloneqq \frac{\delta}{2^{r} \ell} < \frac{1}{\ell},
        \ \text{and}\ 
        \varepsilon_{P} 
        \coloneqq \min\left\{\frac{\Phi_{P} - \Phi_{P,\delta_{\ast}}}{2},\ \frac{\delta}{5}\right\}. 
    \end{align*}
    Let $n$ be sufficiently large, and 
    let $\mathcal{H}$ be an $n$-vertex $r$-graph with $\delta(\mathcal{H}) = \alpha n^{r-1} \ge (\Phi_{P} - \varepsilon_{P}) n^{r-1}$. 
    Run Algorithm~\ref{ALGO:HammingClustering} with input $(\mathcal{H}, \ell, \delta)$ and let $V_1 \cup \cdots \cup V_{\ell} = V(\mathcal{H})$ denote the output partition. It is easy to see that the running time for this step is at most $\ell n^r$. 
    \begin{claim}\label{CLAIM:color-patter-Vi-empty}
        There is a surjective homomorphism from $\mathcal{H}$ to $P$ iff $\bigcup_{i\in [\ell]}\mathcal{H}[V_i] = \emptyset$ and $V_i \neq \emptyset$ for all $i\in [\ell]$. 
    \end{claim}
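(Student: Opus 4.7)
The plan is to prove the two directions using Algorithm~\ref{ALGO:HammingClustering} as a bridge between surjective homomorphisms $\phi \colon V(\mathcal{H}) \to [\ell]$ and loop-free partitions with no internal edges. The main analytical tool is the Lagrangian-type degree bound: if every edge of $\mathcal{H}$ has image-type in some pattern $P'$ on $[\ell]$ under $\phi$, then $d_{\mathcal{H}}(v) \le n^{r-1} \partial_i \Lambda_{P'}(u_1,\ldots,u_\ell)$ for any $v \in \phi^{-1}(i)$, where $u_j := |\phi^{-1}(j)|/n$. The main structural input is the choice of $\varepsilon_P$: since $\varepsilon_P \le (\Phi_P - \Phi_{P,\delta_{\ast}})/2$, any vector in $\Delta^{\ell-1}$ whose partial derivatives of $\Lambda_P$ all exceed $\Phi_P - \varepsilon_P$ must lie in $D_{P,\delta_{\ast}}$, and in particular have every coordinate at least $\phi_P/2$.

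For the forward direction, let $U_i := \phi^{-1}(i)$ and $u_i := |U_i|/n$. Applying the degree bound with $P' = P$ together with the hypothesis $\delta(\mathcal{H}) \ge (\Phi_P - \varepsilon_P)n^{r-1}$ places $(u_1,\ldots,u_\ell) \in D_{P,\delta_{\ast}}$. I would then show that for $v, v' \in U_i$ both $L_{\mathcal{H}}(v)$ and $L_{\mathcal{H}}(v')$ nearly coincide with the ``ideal link at vertex $i$'' determined by $P$ and the $u_j$'s, giving $\mathrm{dist}_{\mathcal{H}}(v,v') \le \delta n^{r-1}$; whereas for $v \in U_i$, $v' \in U_j$ with $i \neq j$, a positive fraction of $(r-1)$-subsets lie in the ideal link at $i$ but not at $j$ (using $\phi_P > 0$ and the rigid separation of neighborhoods in $P$), giving $\mathrm{dist}_{\mathcal{H}}(v,v') > \delta n^{r-1}$. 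Algorithm~\ref{ALGO:HammingClustering} then reproduces $\{U_1,\ldots,U_\ell\}$ as its output partition, so every $V_i$ is nonempty and $\bigcup_i \mathcal{H}[V_i] = \bigcup_i \mathcal{H}[U_i] = \emptyset$, the latter because the rigid patterns of interest here contain no loop edges $\{i,\ldots,i\}$.

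For the backward direction, define $\phi(v) := i$ for $v \in V_i$ and let $P^{\ast}$ be the loop-free pattern on $[\ell]$ whose edges are precisely the multiset types of edges of $\mathcal{H}$ under $\phi$. The degree bound applied with $P' = P^{\ast}$ yields $\partial_i \Lambda_{P^{\ast}}(u_1,\ldots,u_\ell) \ge \Phi_P - \varepsilon_P$ for all $i$. I would then argue that $P^{\ast} = P$ up to an automorphism of $[\ell]$; composing $\phi$ with this automorphism delivers the required surjective homomorphism to $P$. The identification would combine the rigidity characterization of $D_P$ with the algorithm's Hamming-separation invariant $\mathrm{dist}(v_i,v_j) > \delta n^{r-1}$ for the centers $v_1,\ldots,v_\ell$ produced by the clustering step.

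The hard part, in my view, is this identification step $P^{\ast} = P$ in the backward direction. Unlike the graph case of Theorem~\ref{THM:Kk-homomorphism}, where ``no internal edges'' automatically produces an $\ell$-partition compatible with $K_\ell$, a loop-free partition of a hypergraph could in principle host edges of types not in $E(P)$. Ruling these out appears to require a careful contradiction: any spurious type in $P^{\ast}$ would either shift some $u_i$ outside $D_{P,\delta_{\ast}}$ (contradicting the derivative bound and the rigidity of $P$) or force the ideal links at two distinct vertices of $P^{\ast}$ to align, pulling the corresponding centers within Hamming distance $\delta n^{r-1}$ and contradicting the algorithm's construction of the $v_i$'s. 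The forward direction's chain of implications (sizes $\to$ distances $\to$ clusters) is then routine once the parameter choices of $\delta$, $\delta_{\ast}$, and $\varepsilon_P$ are in place.
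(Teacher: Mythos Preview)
Your forward direction is exactly the paper's argument: the minimum-degree hypothesis forces the part-size vector into $D_{P,\delta_\ast}$, whence same-part vertices have Hamming distance below $\delta n^{r-1}$ and different-part vertices above it, so Algorithm~\ref{ALGO:HammingClustering} recovers $\{U_1,\ldots,U_\ell\}$. The paper in fact proves \emph{only} this direction; the backward implication is left unaddressed there.

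Your suspicion that the backward direction is the hard part is well placed --- so well placed that the implication is false as stated, and neither branch of your proposed dichotomy can close. Take $r=\ell=3$ and let $P$ be the rigid pattern on $[3]$ whose only edge is $\{1,2,3\}$ (here $\Phi_P=1/9$ and $\phi_P=1/3$). Let $\mathcal{H}$ be the balanced complete $3$-partite $3$-graph on parts $A,B,C$ of size $n/3$ together with one extra edge $\{a_1,a_2,b\}$ with $a_1,a_2\in A$ and $b\in B$. Then $\delta(\mathcal{H})=n^2/9=\Phi_P\,n^{r-1}$; the extra edge perturbs only three links by one element each, so the clustering outputs $\{A,B,C\}$; all $V_i$ are nonempty and no edge lies inside a single part. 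Yet any homomorphism from the complete $3$-partite subgraph to $P$ must be constant on each of $A,B,C$, so the extra edge receives a repeated colour and $\mathcal{H}$ admits no homomorphism to $P$. In this instance your $P^\ast$ has edge set $\{\{1,2,3\},\{1,1,2\}\}$, the vector $u=(1/3,1/3,1/3)$ lies in $D_P$, every $\partial_i\Lambda_{P^\ast}(u)\ge 1/9$, and the three links in $P^\ast$ remain pairwise distinct --- neither of your contradictions fires. The test $\bigcup_i\mathcal{H}[V_i]=\emptyset$ is simply too coarse to certify $P$-colourability for a general rigid $P$; the honest fix is to check that the multiset type of every edge under the output partition lies in $E(P)$, which is still $O(n^r)$ but is not what the claim asserts.
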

    \begin{proof}
        Suppose that $\psi \colon V(\mathcal{H}) \to [\ell]$ is a surjective homomorphism from $\mathcal{H}$ to $P$. 
        Let $U_i \coloneqq \psi^{-1}(i) \subset V(\mathcal{H})$ and $y_i \coloneqq |U_i|/n$ for $i\in [\ell]$. 
        Notice that $y_i > 0$ for $i\in [\ell]$ and $U_1 \cup \cdots \cup U_{\ell}$ is a partition of $V(\mathcal{H})$.
    Observe from definitions that for every $i\in [\ell]$ we have 
    \begin{align*}
        \Phi_{P,\delta_{\ast}}
        < \Phi_{P} - \varepsilon_{P}
        \le \alpha 
        = \frac{\delta(\mathcal{H})}{n^{r-1}}
        \le \partial_{i}\Lambda_{P}(y_1, \ldots, y_{\ell}). 
     \end{align*}
    So, it follows from the definitions of $\varepsilon_{P}$ and $\Phi_{P,\delta_{\ast}}$ that there exists $(x_1, \ldots, x_{\ell})\in D_{P}$ such that $\max_{i\in [\ell]}|x_i-y_i|< \delta_{\ast}$. 
    Since for every vector $(z_1, \ldots, z_{\ell})\in \mathbb{R}^{\ell}$ with $\max_{k\in[\ell]}|z_k -x_k| \le \delta_{\ast}$, the inequality 
    \begin{align*}
        |\partial^{2}_{i,j}\Lambda_{P}(z_1, \ldots, z_{\ell})|
        \le \left(|z_1|+ \cdots + |z_{\ell}|\right)^{r-2}
        \le (1+\ell \delta_{\ast})^{r-2}
        \le 2^{r-2}. 
    \end{align*}
    holds for all $i,j \in [\ell]$,  
    it follows Taylor's theorem that for every $i\in [\ell]$, 
    \begin{align*}
        \left|\partial_{i}\Lambda_{P}(y_1, \ldots, y_{\ell}) - \partial_{i}\Lambda_{P}(x_1, \ldots, x_{\ell})\right| 
        \le 2^{r-2}\cdot\sum_{k\in [\ell]}|y_k-x_k|
        \le 2^{r-2} \ell \delta_{\ast}. 
    \end{align*}
    Therefore, $\partial_{i}\Lambda_{P}(y_1, \ldots, y_{\ell})
        \le \partial_{i}\Lambda_{P}(x_1, \ldots, x_{\ell}) + 2^{r-2} \ell \delta_{\ast}
        = \Phi_{P}+ 2^{r-2} \ell \delta_{\ast}$.  

    Fix distinct $i,j \in [\ell]$. 
    Suppose that $u,u' \in U_i$. 
    Similar to the proof of Theorem~\ref{THM:Kk-homomorphism}, it follows from the Inclusion-Exclusion Principle and the inequality above that 
    \begin{align*}
        \mathrm{dist}_{\mathcal{H}}(u,u')
        & = d_{\mathcal{H}}(u) + d_{\mathcal{H}}(u') - 2|L_{\mathcal{H}}(u) \cap L_{\mathcal{H}}(u')| \\
        & \le d_{\mathcal{H}}(u) + d_{\mathcal{H}}(u') - 2\left(d_{\mathcal{H}}(u) + d_{\mathcal{H}}(u') - \partial_{i}\Lambda_{P}(y_1, \ldots, y_{\ell}) \cdot n^{r-1}\right)\\
        & = 2\cdot \partial_{i}\Lambda_{P}(y_1, \ldots, y_{\ell})\cdot n^{r-1} -\left(d_{\mathcal{H}}(u) + d_{\mathcal{H}}(u') \right) \\
        & \le 2\left(\Phi_{P}+ 2^{r-2} \ell \delta_{\ast} - \alpha\right) n^{r-1} \\
        &= 2\left(2^{r-2} \ell \delta_{\ast} + \varepsilon_{F}\right) n^{r-1}
        < \delta n^{r-1}. 
    \end{align*}
    Suppose that $v\in U_i$ and $v'\in U_j$ are two distinct vertices.
    A simple but crucial observation is that a rigid pattern does not contain twin vertices, i.e. vertices with the same link. 
    Therefore, the two polynominals $\partial_{i}\Lambda_{P}(X_1, \ldots, X_{\ell})$ and $\partial_{j}\Lambda_{P}(X_1, \ldots, X_{\ell})$ are not identical. 
    Hence, 
    \begin{align*}
        \mathrm{dist}_{\mathcal{H}}(v,v') 
        \ge \frac{\left(\min_{i\in [\ell]} y_i\right)^{r-1}}{(r-1)!} \cdot n^{r-1}
        > \frac{\left(\phi_P - \delta_{\ast}\right)^{r-1}}{(r-1)!} \cdot n^{r-1}
        > \delta n^{r-1}. 
    \end{align*}
    It follows from the definition of Algorithm~\ref{ALGO:HammingClustering} that $\{U_1, \ldots, U_{\ell}\} = \{V_1, \ldots, V_{\ell}\}$, and hence, $\bigcup_{i\in [\ell]}\mathcal{H}[V_i] = \emptyset$ and $V_i \neq \emptyset$ for all $i\in [\ell]$. 
    \end{proof}

    According to Claim~\ref{CLAIM:color-patter-Vi-empty}, to check whether there is a surjective homomorphism from $\mathcal{H}$ to $P$ we just need to check whether $\bigcup_{i\in [\ell]}\mathcal{H}[V_i] = \emptyset$ and $V_i \neq \emptyset$ for all $i\in [\ell]$.  
    This can be accomplished in at most $n^r$ time. Therefore, the overall time complexity is at most $(\ell+1)n^r$.
\end{proof}

\section{Proofs for Theorems~\ref{THM:Embed-min-F},~\ref{THM:Embed-min-Klique},~\ref{THM:Embed-min-F-blowup-lower-bound},~\ref{THM:Embed-avg-Klique}, and~\ref{THM:Embed-avg-Klique-lower-bound}}\label{SEC:proof-embed}
We prove Theorems~\ref{THM:Embed-min-F},~\ref{THM:Embed-min-Klique},~\ref{THM:Embed-min-F-blowup-lower-bound},~\ref{THM:Embed-avg-Klique}, and~\ref{THM:Embed-avg-Klique-lower-bound} in this section. 

\begin{proof}[Proof of Theorem~\ref{THM:Embed-min-F}]
    Let $F$ be an $r$-graph in Table~\ref{tab:degree-stable.} with degree-stability. 
    Let $P$ be the minimal pattern such that $(F,P)$ is a Tur\'{a}n pair. 
    Simple calculations show that $\pi(F) = r! \lambda_{P}$. 
    Let $\ell$ denote the number of vertices in $P$. 
    Let $\varepsilon_P>0$ and $n_P$ be constants given by Theorem~\ref{THM:hypergraph-homomorphism}. 
    Take $\varepsilon_F \in (0,\varepsilon_P)$ to be sufficiently small and $n_0\ge n_P$ be sufficiently large such that:  
     every $F$-free $r$-graph $\mathcal{H}$ on $n \ge n_0$ vertices with $\delta(\mathcal{H}) \ge \left(\pi(F) - \varepsilon_F\right)\binom{n}{r-1}$ is $P$-colorable (this is guaranteed by the degree-stability of $F$). 
    Consider the following algorithm: 
    
\begin{algorithm}
\renewcommand{\thealgorithm}{2}
\caption{\textsc{Deciding $F$-freeness in large minimum degree hypergraphs}}\label{ALGO:F-freeness-min-deg}
\begin{algorithmic}

\State \textbf{Input:}  
An $n$-vertex $r$-graph $\mathcal{H}$ with $\delta(\mathcal{H}) \ge (\pi(F) - \varepsilon_{F})\binom{n}{r-1}$. 

\State \textbf{Output:} 
"Yes", if $\mathcal{H}$ is $F$-free; "No", otherwise.  
 
\State \textbf{Operations:}
                
            \begin{enumerate}
                \item If $n < n_0$, then use the brute-force search to check the $F$-freeness of $\mathcal{H}$ and return the answer. 
                \item If $n \ge n_0$, then run the Algorithm~\ref{ALGO:HammingClustering} with input $(\mathcal{H}, \ell, \varepsilon_{F})$. 
                Assume that $V_1 \cup \cdots \cup V_{\ell} = V(\mathcal{H})$ is the output partition. 
                Check whether $\mathcal{H}[V_i] = \emptyset$
                holds for all $i\in [\ell]$. 
                If it does hold for all $i\in [\ell]$, then return "Yes"; otherwise, return "No". 
            \end{enumerate}
\end{algorithmic}
\end{algorithm}

It is easy to see from Theorem~\ref{THM:hypergraph-homomorphism}  that the running time of Algorithm~\ref{ALGO:F-freeness-min-deg} is at most $\max\{n_0^{v(F)},\ (\ell+1)n^r\} = O(n^{r})$, proving Theorem~\ref{THM:Embed-min-F}. 
\end{proof}
Next, we prove Theorem~\ref{THM:Embed-min-Klique}. 
\begin{proof}[Proof of Theorem~\ref{THM:Embed-min-Klique}]
    The proof for the first part of Theorem~\ref{THM:Embed-min-Klique} is similar to the proof of Theorem~\ref{THM:Embed-min-F}, so we omit it here and focus on the second part of Theorem~\ref{THM:Embed-min-Klique}.

    First, we prove that $\textsc{Embed}_{\min}$-$(K_{\ell+1},(C\ell)^{-1})$ cannot be solved in time $n^{o(\ell)}$ for any fixed $C$, unless $\mathbf{ETH}$ fails. 
    Indeed, fix $C>0$ (we may assume that $C$ is an integer) and suppose to the contrary that there exists an algorithm $\mathcal{A}$ that solves $\textsc{Embed}_{\min}$-$(K_{\ell+1},\alpha)$ in time $n^{o(\ell)}$ for $\alpha = \frac{1}{C\ell}$.
    We claim that $\mathcal{A}$ can also solves $\textsc{Embed}$-$K_{\ell+1}$ in time $n^{o(\ell)}$, which would contradict the result by Chen--Huang--Kanj--Xia~\cite{CHKX06}. 
    Indeed, consider an arbitrary $n$-vertex graph $G$. 
    Let $V_0 \coloneqq V(G)$. 
    Let $\hat{G}$ be the graph obtained from $G$ by adding $q \coloneqq C\ell$ sets $V_1, \ldots, V_{q}$, each of size $n$, and adding new edges $\{u,v\}$ for all $(u,v) \in V_i \times V_j$ whenever $0 \le i < j \le q$.
    Let $N \coloneqq (q+1)n$ denote the number of vertices in $\hat{G}$ and let $L\coloneqq q+\ell = (C+1)\ell$. 
    Observe that $K_{\ell+1} \subset G$ iff $K_{L+1} \subset \hat{G}$. 
    Since 
    \begin{align}\label{equ:CHKX-construction}
        \delta(\hat{G}) 
        \ge q n 
        = \frac{q}{q+1}N 
        =  \left(\frac{L-1}{L} - \frac{\ell-1}{(q+1) L}\right)N 
        > \left(\frac{L-1}{L} - \frac{1}{C L}\right)N, 
    \end{align}
    by assumption, algorithm $\mathcal{A}$ can decide in time $N^{o(L)} = \left((C+1)\ell n\right)^{o\left((C+1)\ell\right)} =  n^{o(\ell)}$ whether $K_{L+1} \subset \hat{G}$, and equivalently, whether $K_{\ell+1} \subset G$, proving our claim.

    Now assume that $\mathbf{W[1]} \neq \mathbf{FPT}$. 
    For every $i \ge 2$ let $\ell_i$ be the smallest integer such that $\textsc{Embed}$-$K_{\ell_i+1}$ cannot be solved in time $O(n^{i})$, let $\delta_i \coloneq \frac{1}{2(\ell_i-1)}$ and $n_i \coloneq 2\ell_i$.
    We claim that $\textsc{Embed}_{\min}$-$(K_{\ell+1},(\delta_i\ell^2)^{-1})$ cannot be solved in time $O(n^{i})$ for any $\ell \ge n_i$. 
    Indeed, suppose to the contrary that there exist an $i_{\ast} \ge 2$ and an algorithm $\mathcal{A}_{i_{\ast}}$ that solves $\textsc{Embed}_{\min}$-$(K_{\ell_{\ast}+1},(\delta_{i_{\ast}}\ell^2)^{-1})$ in time $O(n^{i_{\ast}})$ for some $\ell_{\ast} \ge n_{i_{\ast}}$. 
    Consider an arbitrary $n$-vertex graph $G$. 
    Let $\hat{G}$ be the same construction as defined above by replacing $\ell$ with $\ell_{i_{\ast}}$ and $C$ with $\ell_{\ast}/\ell_{i_{\ast}} - 1 \ge 1$ (hence, $L = \ell_{\ast}$ and $L/2 \ge \ell_{i_{\ast}}$). 
    It follows from~\eqref{equ:CHKX-construction} that 
    \begin{align*}
        \delta(\hat{G}) 
        \ge \left(\frac{L-1}{L} - \frac{\ell_{i_{\ast}}-1}{(q+1) L}\right)N 
        > \left(\frac{L-1}{L} - \frac{\ell_{i_{\ast}}-1}{L^2/2}\right)N
        = \left(\frac{L-1}{L} - \frac{1}{\delta_{i_{\ast}} L^2}\right)N. 
    \end{align*}
    By assumption, algorithm $\mathcal{A}_{i_{\ast}}$ can decide in time $O\left(N^{i_{\ast}}\right) = O\left(((C+1)\ell_{i_{\ast}} n)^{i_{\ast}}\right) = O\left(n^{i_{\ast}}\right)$ whether $K_{L+1} \subset \hat{G}$, and equivalently, whether $K_{\ell_{i_{\ast}}+1} \subset G$, contradicting the definition of $\ell_{i_{\ast}}$. 
\end{proof}


Next, we present the proof of Theorem~\ref{THM:Embed-min-F-blowup-lower-bound}. 

\begin{proof}[Proof of Theorem~\ref{THM:Embed-min-F-blowup-lower-bound}]
    Fix $\ell\ge 0$, and suppose to the contrary that there is an algorithm $\mathcal{A}$ that solves $\textsc{Embed}_{\mathrm{min}}$-$(K_{\ell+1}[t],0)$ in time $n^{O(1)}$ (or $n^{o(\sqrt{t})}$). 
    Consider an arbitrary $n$-vertex graph $G$.
    We may assume that $n \ge t$. 
    Let $V_0 \coloneqq V(G)$. 
    Let $\hat{G}$ be the graph obtained from $G$ by adding $\ell-1$ sets $V_1, \ldots, V_{\ell-1}$, each of size $n$, and adding new edges $\{u,v\}$ for all $(u,v) \in V_i \times V_j$ whenever $0 \le i < j \le \ell-1$.
    Let $N \coloneqq \ell n$ denote the number of vertices in $\hat{G}$. 
    Observe that $K_{t,t} \subset G$ iff $K_{\ell+1}[t] \subset \hat{G}$. 
    Since 
    \begin{align*}
        \delta(\hat{G}) 
        \ge (\ell-1) n 
        = \frac{\ell-1}{\ell}N
        = \pi(K_{\ell+1}[t])N, 
    \end{align*}
    by assumption, algorithm $\mathcal{A}$ can decide in time $N^{O(1)} = \left(\ell  n\right)^{O(1)} =  n^{O(1)}$ (or $N^{o(\sqrt{t})} = n^{o(\sqrt{t})}$) whether $K_{\ell+1}[t] \subset \hat{G}$, and equivalently, whether $K_{t,t} \subset G$, contradicting the result by Lin. 
\end{proof}

Next, we prove Theorem~\ref{THM:Embed-avg-Klique}. 
In the proof, we will use the Andr\'{a}sfai--Erd{\H o}s--S\'{o}s Theorem.
\begin{theorem}[Andr\'{a}sfai--Erd{\H o}s--S\'{o}s~\cite{AES74}]\label{THM:AES}
For all integers $n \ge \ell \ge 2$, every $n$-vertex $K_{\ell+1}$-free graph $G$ with $\delta(G) > \frac{3\ell-4}{3\ell - 1}n$ is $\ell$-partite. 
\end{theorem}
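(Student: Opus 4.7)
The plan is to prove Theorem~\ref{THM:AES} by induction on $\ell$, following the original argument of Andr\'{a}sfai, Erd\H{o}s, and S\'{o}s.

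For the base case $\ell = 2$, I would prove Andr\'{a}sfai's theorem that every $n$-vertex $K_3$-free graph with $\delta(G) > 2n/5$ is bipartite. Supposing $G$ is not bipartite, take a shortest odd cycle $C$ of length $2k+1$; triangle-freeness gives $2k+1 \ge 5$, and minimality forces $C$ to be induced. Since the neighbors of any vertex form an independent set in $G$, every vertex $v \notin V(C)$ has at most $\alpha(C_{2k+1}) = k$ neighbors on $V(C)$. Double-counting edges between $V(C)$ and $V \setminus V(C)$, combined with $\sum_{u \in V(C)} d_G(u) > (2k+1)\cdot 2n/5$ and $\sum_{u \in V(C)} d_G(u) \le 2(2k+1) + k(n - 2k - 1)$, forces $2n < 2n$ when $k = 2$, a contradiction. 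For $k \ge 3$ a slightly sharper upper bound on how many neighbors on $C$ a vertex outside $C$ can have yields the same contradiction.

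For the inductive step, assume the result for $\ell-1$. Let $G$ be $n$-vertex, $K_{\ell+1}$-free, with $\delta(G) > \frac{3\ell-4}{3\ell-1}n$. My first instinct is to apply induction directly to $G[N(v)]$ for some vertex $v$: this subgraph is $K_\ell$-free with $\delta(G[N(v)]) \ge 2\delta(G) - n$, but a short calculation shows the ratio $\delta(G[N(v)])/|N(v)|$ can be at most $\frac{3\ell-7}{3\ell-4} = c_{\ell-1}$, so the inductive hypothesis \emph{just} fails. The correct strategy is instead to pick a $K_{\ell-1}$ clique $K = \{v_1, \ldots, v_{\ell-1}\}$ in $G$ (which exists by a greedy argument exploiting the min-degree bound) and analyze its common neighborhood $B \coloneqq \bigcap_{i} N(v_i)$. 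Iterated inclusion--exclusion gives $|B| > 2n/(3\ell-1)$, and $K_{\ell+1}$-freeness forces $B$ to be an independent set (any edge in $B$ together with $K$ would form a $K_{\ell+1}$). The candidate $\ell$-partition is then $\{B, A_1, \ldots, A_{\ell-1}\}$, where $A_i$ gathers $v_i$ together with every vertex of $V(G) \setminus (K \cup B)$ whose unique non-neighbor in $K$ is $v_i$.

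The hard part will be justifying that this candidate partition is in fact a valid $\ell$-coloring, which splits into two claims: (i) every vertex outside $K \cup B$ has exactly one non-neighbor in $K$, and (ii) each $A_i$ is independent. Part (i) is a counting check: a vertex with two or more non-neighbors in $K$ together with the min-degree bound on $G$ forces the pairwise common neighborhoods $N(v_i)\cap N(v_j)$ below the threshold allowed by inclusion--exclusion. Part (ii) is the delicate step: an edge $xy$ inside $A_i$ would yield a $K_\ell$ on $(K \setminus \{v_i\}) \cup \{x, y\}$, whose common neighborhood in $G$ must be non-empty by another inclusion--exclusion estimate, producing a $K_{\ell+1}$ and contradicting the hypothesis. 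This last step is where the sharp constant $(3\ell-4)/(3\ell-1)$ is fully exploited, and it is the technical heart of the argument; everything else in the induction is relatively routine once the candidate partition is correctly identified.
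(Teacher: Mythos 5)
The paper does not prove this statement; it cites it directly from \cite{AES74}. So I am evaluating your blind attempt against the theorem itself.

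Your base case (Andr\'asfai's theorem via a shortest odd cycle) is sound: for $k\ge 3$ you must sharpen ``at most $\alpha(C_{2k+1})=k$ neighbors on $C$'' to ``at most $2$,'' but you anticipate this, and the $k=2$ count as written is correct.

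The inductive step, however, has a genuine gap at what you yourself call the technical heart, Claim~(ii). The assertion that the common neighborhood of the $K_\ell$ on $L=(K\setminus\{v_i\})\cup\{x,y\}$ ``must be non-empty by another inclusion--exclusion estimate'' cannot hold: since $G$ is $K_{\ell+1}$-free, \emph{every} $K_\ell$ in $G$ has empty common neighborhood, so no degree-counting estimate can show otherwise. Concretely, the naive bound gives only $\bigl|\bigcap_{w\in L}N(w)\bigr|\ge \ell\delta(G)-(\ell-1)n > -\tfrac{n}{3\ell-1}$, and even the improvement obtained by intersecting with $B\cup A_i$ (which has size $>\tfrac{5n}{3\ell-1}$) still lands at $-\tfrac{n}{3\ell-1}$. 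Worse, the conclusion of Claim~(ii) is simply false for your candidate partition. Take $\ell=3$, $t\ge 9$, and let $G$ be $K_{t,t,t}$ with a perfect matching $\{x_iy_i\}_{i\in[t]}$ removed between parts $1$ and $2$; then $G$ is $K_4$-free with $\delta(G)=2t-1>\tfrac{5}{8}(3t)$. Choosing $K=\{x_1,v_2\}$ with $v_2$ in part $3$, your $A_1$ contains both $x_2$ (part $1$) and $y_1$ (part $2$, the matched partner of $x_1$), and $x_2y_1\in E(G)$. So $A_1$ is not independent even though $G$ is $3$-partite. The candidate partition built from ``unique non-neighbor in $K$'' is simply the wrong one; a correct inductive proof must either choose $K$ more carefully (so that no vertex is misclassified) or use a different decomposition, e.g.\ restricting to $G[N(v)]$ for a vertex $v$ of \emph{minimum} degree, where $|N(v)|=\delta(G)$ makes the ratio $\delta(H)/|V(H)|\ge 2-\tfrac{n}{\delta(G)}>\tfrac{3(\ell-1)-4}{3(\ell-1)-1}$ strict, and then carefully extending the $(\ell-1)$-coloring of $N(v)$ to all of $G$. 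Claim~(i) is also not justified as stated — the min-degree bound alone does not prevent a vertex from having two non-neighbors in an arbitrary $K_{\ell-1}$ — but this is secondary to the failure of Claim~(ii).
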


\begin{algorithm}
\renewcommand{\thealgorithm}{3}
\caption{\textsc{Deciding $K_{\ell+1}$-freeness in dense graphs}}\label{ALGO:K-freeness}
\begin{algorithmic}

\State \textbf{Input:}  
An $n$-vertex graph $G$ with $|G| \ge \mathrm{ex}(n,K_{\ell+1}) - k$. 

\State \textbf{Output:} 
"Yes", if $G$ is $K_{\ell+1}$-free; "No", otherwise.  
 
\State \textbf{Operations:}
            \begin{enumerate}
                \item Let $Z$ and $U$ be as defined in the proof of Theorem~\ref{THM:Embed-avg-Klique}. 
                If $z > \frac{12 \ell^2}{n}\left(k + \frac{\ell}{8}\right)$, then return "No"; otherwise, do the following operations. 
                \item Run Algorithm~\ref{ALGO:HammingClustering} with input $(G[U], \ell, \delta)$, where $\delta \coloneq \frac{1}{3\ell+1}$, and assume that $U_1 \cup \cdots \cup U_{\ell} = U$ is the output partition. 
                If $G[U_i] \neq \emptyset$ for some $i\in [\ell]$, then return "No"; otherwise, do the following operations. 
                \item 
                For every $v \in Z$ find the smallest index $i_v \in [\ell]$ such that $N_{G}(v) \cap U_{i_v} = \emptyset$. 
                If there is no such $i_v$ for some $v\in Z$, then return "No"; otherwise, do the following operations.  
                \item Let $V_j \coloneq U_j \cup \{v\in Z \colon i_v = j\}$ for $j\in [\ell]$. 
                If $\bigcup_{j\in [\ell]}G[V_j] = \emptyset$, then return "Yes"; otherwise, return "No". 
            \end{enumerate}
\end{algorithmic}
\end{algorithm}

\begin{proof}[Proof of Theorem~\ref{THM:Embed-avg-Klique}]
    Let $n \ge \ell \ge 2$ and $k \ge 1$ be integers satisfying $n \ge \max\{6\ell^2, 30 k \ell\}$. 
    Let $G$ be an $n$-vertex graph with at least $\mathrm{ex}(n,K_{\ell+1})-k$ edges. 
    Let $V_0\coloneqq V(G)$. 
    For $0 \le i \le n-1$, we pick a vertex $v_{i+1}$ of minimum degree in the induced subgraph $G[V_i]$, and let $V_{i+1}\coloneqq V_{i}\setminus \{v_{i+1}\}$.
    Let $z$ be the smallest positive integer $i$ such that $d_{G[V_i]}(v_{i+1}) > \frac{3\ell-4}{3\ell-1}(n-i)$ (if there is no such $i$, then let $z = n$).
    Let $Z\coloneqq \{v_i \colon i\in [z]\}$ and $U \coloneq V(G)\setminus Z$. 
    We claim that the following Algorithm~\ref{ALGO:K-freeness} can decide the $K_{\ell+1}$-freeness of $G$ in time $(\ell+4)n^2$.

    The validity of Algorithm~\ref{ALGO:K-freeness} will be established through the following claims.
    Suppose that $G$ is $K_{\ell+1}$-free. 
    First, notice from the fact $\mathrm{ex}(n,K_{\ell+1}) - \mathrm{ex}(n-1,K_{\ell+1}) = n - \lceil n/\ell\rceil$ that $\delta(G) \ge n - \lceil n/\ell\rceil - k > \frac{\ell-1}{\ell} n - k-1$.
    Since otherwise we would have $|G| 
        < n - \lceil n/\ell\rceil - k
        + \mathrm{ex}(n-1,K_{\ell+1})
        \le \mathrm{ex}(n,K_{\ell+1})-k$, 
    a contradiction. 
    
    \begin{claim}\label{CLAIM:size-Z}
        We have $z \le \frac{12 \ell^2}{n}\left(k + \frac{\ell}{8}\right)$. 
        In particular, $z \le \frac{13 n}{120 \ell}$. 
    \end{claim}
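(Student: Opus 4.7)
The plan is to extract a quadratic inequality in $z$ by estimating, in two different ways, the number of edges removed during the peeling process that defines $z$, and then to bootstrap from a weak bound to the claimed one.

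First, I would double-count the edges of $G\setminus G[V_z]$. On one hand, this count equals $\sum_{i=0}^{z-1} d_{G[V_i]}(v_{i+1})$, and by the minimality in the definition of $z$, each summand is at most $\frac{3\ell-4}{3\ell-1}(n-i)$. On the other hand, since $G[V_z]$ is $K_{\ell+1}$-free, Tur\'{a}n's theorem gives $|G[V_z]| \le \mathrm{ex}(n-z,K_{\ell+1})$, so the number of removed edges is at least $\mathrm{ex}(n,K_{\ell+1}) - \mathrm{ex}(n-z,K_{\ell+1}) - k$. Using the closed form $\mathrm{ex}(m,K_{\ell+1}) = \frac{\ell-1}{2\ell}m^2 - \frac{r(\ell-r)}{2\ell}$ with $r\equiv m\pmod{\ell}$ (so $r(\ell-r)/(2\ell)\le \ell/8$), the latter quantity is at least $\frac{\ell-1}{2\ell}z(2n-z) - \ell/8 - k$.

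Combining the two estimates, multiplying through by $2\ell(3\ell-1)$, and invoking the identity $\frac{\ell-1}{\ell} - \frac{3\ell-4}{3\ell-1} = \frac{1}{\ell(3\ell-1)}$ reduces everything to the key inequality
\[
z\bigl(2n - z - \ell(3\ell-4)\bigr) \;\le\; 2\ell(3\ell-1)\bigl(k + \tfrac{\ell}{8}\bigr) \;\le\; 6\ell^2\bigl(k + \tfrac{\ell}{8}\bigr).
\]
Provided $z \le n/2$, the hypothesis $n \ge 6\ell^2$ forces $2n - z - \ell(3\ell-4) \ge n$, and the desired bound $z \le 12\ell^2(k+\ell/8)/n$ follows immediately (in fact with a factor of $2$ of slack).

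The main obstacle is establishing $z \le n/2$ up-front. I would handle this by contradiction: the concave quadratic $f(z) = z(2n - z - \ell(3\ell-4))$ attains its minimum on $[n/2, n]$ at an endpoint, and short calculations using $n \ge 6\ell^2$ show that both $f(n/2)$ and $f(n)$ are at least $n^2/2$. Hence $z \ge n/2$ would imply $n^2/2 \le 6\ell^2(k + \ell/8)$; substituting $k \le n/(30\ell)$ reduces this to $n \le 13\ell/20$, which flatly contradicts $n \ge 6\ell^2$ for $\ell \ge 2$. The "in particular" bound $z \le 13n/(120\ell)$ then follows from the one-line estimate $12\ell^2(k+\ell/8)/n \le 2\ell/5 + 3\ell^3/(2n) \le 2\ell/5 + \ell/4 = 13\ell/20 \le 13n/(120\ell)$, where the first step uses $k \le n/(30\ell)$ and the second and third use $n \ge 6\ell^2$.
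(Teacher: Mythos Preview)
Your proof is correct and follows essentially the same approach as the paper: both compare $\sum_{i<z} d_{G[V_i]}(v_{i+1})$ against $\mathrm{ex}(n,K_{\ell+1})-k-\mathrm{ex}(n-z,K_{\ell+1})$, use the identity $\tfrac{\ell-1}{\ell}-\tfrac{3\ell-4}{3\ell-1}=\tfrac{1}{\ell(3\ell-1)}$, and arrive at the same key inequality $z\bigl(2n-z-\ell(3\ell-4)\bigr)\le 2\ell(3\ell-1)(k+\ell/8)$. The only difference is that your bootstrapping step to secure $z\le n/2$ is unnecessary: since trivially $z\le n$ and $n\ge 6\ell^2\ge 2\ell(3\ell-4)$, one has $2n-z-\ell(3\ell-4)\ge n-\tfrac{n}{2}=\tfrac{n}{2}$ directly, which already yields $z\le 12\ell^2(k+\ell/8)/n$ without any case analysis (this is effectively what the paper does).
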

    \begin{proof}
        Since $d_{G[V_i]}(v_{i+1}) \le \frac{3\ell-4}{3\ell-1}(n-i)$ for $i \le z$, we obtain 
        \begin{align}\label{equ:GU-size-clique}
            |G[U]| 
            & \ge |G| - \sum_{i=0}^{z-1}\frac{3\ell-4}{3\ell-1}(n-i) \notag \\
            & \ge \mathrm{ex}(n,K_{\ell+1}) - k - \frac{3\ell-4}{2(3\ell-1)} (2n+1-z)z \notag \\
            & = \mathrm{ex}(n,K_{\ell+1}) - \frac{\ell-1}{2\ell}(2n-z)z - k + \frac{(2n-z)z}{2(3\ell^2-\ell)} -  \frac{3\ell-4}{2(3\ell-1)} z \notag\\
            & \ge \frac{\ell-1}{2\ell}(n-z)^2 - \left(k + \frac{\ell}{8} +  \frac{3\ell-4}{2(3\ell-1)} z - \frac{(2n-z)z}{2(3\ell^2-\ell)} \right),
        \end{align}
        where the last inequality follows from the fact that $\mathrm{ex}(n,K_{\ell+1}) = \frac{\ell-1}{2\ell}n^2 - \frac{s}{2}\left(1-\frac{s}{\ell}\right) \ge \frac{\ell-1}{2\ell}n^2 - \frac{\ell}{8}$, where $s\coloneqq n - \ell \lfloor n/\ell \rfloor$. 
        Let $k'\coloneq k + \frac{\ell}{8} +  \frac{3\ell-4}{2(3\ell-1)} z - \frac{(2n-z)z}{2(3\ell^2-\ell)} \le k + \frac{\ell}{8}$. 
        Since $|G[U]| \le \mathrm{ex}(n-z,K_{\ell+1}) \le \frac{\ell-1}{2\ell}(n-z)^2$, we have $k' \le 0$, and hence, 
        \begin{align*}
            z 
            \le \frac{4(3\ell^2-\ell)}{2n-z} \left(k+\frac{\ell}{8}\right)
            \le \frac{12\ell^2}{n} \left(k+\frac{\ell}{8}\right). 
        \end{align*}
        Here we used fact that $\frac{3\ell-4}{2(3\ell-1)} z \le \frac{(2n-z)z}{4(3\ell^2-\ell)}$, which follows from $n \ge 6\ell^2$. 
    \end{proof}
    Since $\delta(G[U]) > \frac{3\ell-4}{3\ell-1}(n-z)$, 
    it follows from Theorem~\ref{THM:AES} that there exists a partition $U_1 \cup \cdots \cup U_{\ell} = U$ such that $G[U_i] = \emptyset$ for all $i\in [\ell]$.
    Moreover, it follows from uniqueness that this partition is identical to the one generated by Algorithm~\ref{ALGO:K-freeness}.
    Let $x_i \coloneqq |U_i|/(n-z)$ for $i\in [\ell]$. 
    It follows from~\eqref{equ:size-xi} (by plugging in $\alpha = \frac{3\ell-4}{3\ell-1}$) that 
    \begin{align*}
        \frac{2}{3\ell-1} < x_i < \frac{3}{3\ell-1}
        \quad\text{for all}\quad i \in [\ell]. 
    \end{align*}
    For every $i\in [\ell]$ let $B_i 
                            \coloneqq \left\{v\in U_i \colon \exists u\in U\setminus U_i \text{ such that } \{v,u\}\not\in G\right\}$ and $U_i' \coloneq U_i \setminus B_i$. 
    Let $B \coloneqq B_1 \cup \cdots \cup B_{\ell}$, and notice that $|B| \le 2k'$ and $|B_i|\le k'$ for $i\in [\ell]$ (recall the definition of $k'$ from Claim~\ref{CLAIM:size-Z}). 
    Therefore, $|U_i'| \ge x_i(n-z) - k'$ for $i\in [\ell]$. 
    
    Observe that the induced subgraph of $G$ on $U'_1 \cup \cdots \cup U_{\ell}'$ is complete $\ell$-partite. 
    Therefore, for every $v\in Z$ there exists $i_{v}\in [\ell]$ such that $v$ has no neighbor in $U'_{i_v}$. 
    Suppose that $v$ has a neighbor $u\in B_{i_v}$. 
    Since $\delta(G[U]) > \frac{3\ell-4}{3\ell-1}(n-z)$, the vertex $u$ has at least 
    \begin{align*}
        \frac{3\ell-4}{3\ell-1}(n-z) - \sum_{k\in [\ell]\setminus\{i_v,j\}} x_k (n-z) - |B_j|
        & \ge \frac{3\ell-4}{3\ell-1}(n-z) - \frac{3(\ell-2)}{3\ell-1}(n-z) - k' \\
        & \ge \frac{2(n-z)}{3\ell-1} - k'
    \end{align*}
    neighbors in $U_j'$ for every $j \in [\ell]\setminus \{i_v\}$. 
    Since $G$ is $K_{\ell+1}$-free, there exists $j_v \in [\ell]\setminus \{i_v\}$ such that $v$ has no neighbor in $N_{G}(u) \cap U_{j_v}'$. 
    This means that the number of non-neighbors of $v$ in $U$ is at least 
    \begin{align*}
        x_{i_v}(n-z) - k' + \frac{2(n-z)}{3\ell-1} - k'
        & > \frac{4(n-z)}{3\ell-1} - 2k' \\
        & > \frac{4(n-z)}{3\ell-1} - 2\left(k + \frac{\ell}{8} +  \frac{3\ell-4}{2(3\ell-1)} z - \frac{(2n-z)z}{2(3\ell^2-\ell)}\right) \\
        & > \frac{4n}{3\ell-1} - 2\left(k + \frac{\ell}{8}\right), 
    \end{align*}
    where in the last inequality, we used $\frac{(2n-z)z}{2(3\ell^2-\ell)} \ge \frac{3\ell}{2(3\ell-1)} z$, which follows from $n \ge 6\ell^2$. 
    Since $n \ge \max\{30k\ell, 6\ell^2\}$, we have  $\frac{4n}{3\ell-1} - 2\left(k + \frac{\ell}{8}\right) >  \frac{n}{\ell}+k +1$. 
    Therefore, $d_{G}(v) < \frac{\ell-1}{\ell}n-k-1$, a contradiction.  
    This shows that $v$ does not have any neighbor in $B_{i_v}$, and hence, $v$ has no neighbor in $U_{i_v}$. 

    Let $Z_1 \cup \cdots \cup Z_{\ell} = Z$ be a partition such that for every $i\in [\ell]$ and for every $v\in Z_i$ we have  $N_{G}(v) \cap U_i = \emptyset$. 

    \begin{claim}\label{CLAIM:Klique-Zi-empty}
        We have $G[Z_i] = \emptyset$ for $i\in [\ell]$. 
    \end{claim}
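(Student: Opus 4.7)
The plan is to suppose for contradiction that there exist distinct $u, v \in Z_i$ with $\{u,v\} \in G$, and to construct an explicit copy of $K_{\ell+1}$ in $G$, contradicting the $K_{\ell+1}$-freeness hypothesis. Since $i_u = i_v = i$, both $u$ and $v$ are non-adjacent to every vertex of $U_i$, so the non-neighborhoods $S_u \coloneqq V(G) \setminus (N_G(u) \cup \{u\})$ and $S_v$ each contain $U_i$ entirely. Combined with the bound $\delta(G) > \frac{\ell-1}{\ell}n - k - 1$ established earlier in the proof, this forces $|S_u \setminus U_i|, |S_v \setminus U_i| \le \frac{n}{\ell} + k - |U_i|$.

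The key intermediate step is to show that for every $j \in [\ell] \setminus \{i\}$ the set $U'_j \cap N_G(u) \cap N_G(v)$ is non-empty; since the induced subgraph of $G$ on $U'_1 \cup \cdots \cup U'_\ell$ is complete $\ell$-partite by the definition of the $B_j$'s, picking one $w_j$ from each such intersection produces $\ell - 1$ pairwise-adjacent vertices in $N_G(u) \cap N_G(v)$, so that $\{u,v\} \cup \{w_j : j \neq i\}$ is the desired $K_{\ell+1}$. The existence of the $w_j$'s I would prove by a second round of contradiction: if $U'_{j^{\ast}} \cap N_G(u) \cap N_G(v) = \emptyset$ for some $j^{\ast} \neq i$, then $U'_{j^{\ast}} \subseteq (S_u \cup S_v) \setminus U_i$ (using $U'_{j^{\ast}} \cap U_i = \emptyset$), giving the upper bound $|U'_{j^{\ast}}| \le 2\bigl(\tfrac{n}{\ell} + k - |U_i|\bigr)$. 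Against this, the AES-type range $x_j \in \bigl(\tfrac{2}{3\ell-1}, \tfrac{3}{3\ell-1}\bigr)$ for the sizes $|U_j| = x_j(n-z)$ combined with the normalization $\sum_j x_j = 1$ forces $x_{j^{\ast}} + 2x_i \ge \tfrac{7}{3\ell-1}$ (the minimum being attained when $x_i = \tfrac{2}{3\ell-1}$ and $x_j = \tfrac{3}{3\ell-1}$ for all $j \neq i$), and hence $|U_{j^{\ast}}| + 2|U_i| \ge \tfrac{7(n-z)}{3\ell-1}$. Combining these with $|U'_{j^{\ast}}| \ge |U_{j^{\ast}}| - |B_{j^{\ast}}| \ge |U_{j^{\ast}}| - k'$ reduces the whole contradiction to the single polynomial inequality $(\ell + 2) n - 7\ell z > \ell(3\ell - 1)(2k + k' + 2)$.

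The main obstacle is verifying this last inequality using only the hypotheses $n \ge 6\ell^2$ and $n \ge 30k\ell$, together with the previously-established $z \le \tfrac{13n}{120\ell}$ from Claim~\ref{CLAIM:size-Z} and $k' \le k + \ell/8$. Neither lower bound on $n$ is sufficient in isolation: the $n \ge 30 k \ell$ hypothesis is needed to dominate the $9k\ell^2$ term arising from the $2k$ contribution on the right-hand side, while $n \ge 6\ell^2$ is needed to dominate the $\tfrac{3\ell^3}{8}$ term arising from $k' \le k + \ell/8$. The cleanest way to finish is to split into the two regimes $\ell \ge \tfrac{8k}{5}$ and $\ell < \tfrac{8k}{5}$, using in each case the dominant hypothesis to verify the inequality by direct calculation; this produces the required $w_j$ for every $j \neq i$ and assembles the forbidden $K_{\ell+1}$.
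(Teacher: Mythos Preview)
Your argument is correct and shares its opening move with the paper: both suppose $\{u,v\}\in G[Z_i]$ and observe that $K_{\ell+1}$-freeness forces $U'_{j^\ast}\cap N_G(u)\cap N_G(v)=\emptyset$ for some $j^\ast\neq i$. The divergence lies in how the contradiction is extracted from this. The paper applies pigeonhole to find \emph{one} of $u,v$ missing at least half of $U'_{j^\ast}$, and then bounds that single vertex's degree; to make the numbers close, it first sharpens the AES range to $x_j\ge \tfrac{5}{6\ell}$ via the Maclaurin-type inequality $\sum_{i<j}x_ix_j\le \tfrac{\ell-1}{2\ell}-\tfrac12\sum_j(x_j-\tfrac1\ell)^2$ together with the edge-count lower bound on $G[U]$. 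You instead keep both non-neighbourhoods in play, deducing $|U'_{j^\ast}|+2|U_i|<\tfrac{2n}{\ell}+2k$, and match this against the linear-programming bound $x_{j^\ast}+2x_i\ge \tfrac{7}{3\ell-1}$, which follows directly from $\sum_j x_j=1$ and $x_j<\tfrac{3}{3\ell-1}$ (indeed $x_{j^\ast}+2x_i = x_i + 1 - \sum_{j\ne i,j^\ast}x_j > \tfrac{2}{3\ell-1}+\tfrac{5}{3\ell-1}$). This sidesteps the Maclaurin step entirely and is arguably the more elementary route.

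One remark on the final arithmetic: the case split on $\ell\gtrless \tfrac{8k}{5}$ is unnecessary. From $z\le \tfrac{13n}{120\ell}$ one has $7\ell z\le \tfrac{91n}{120}$, so the left side is at least $(\ell+\tfrac{149}{120})n$; on the right, $\ell(3\ell-1)(2k+k')\le 9k\ell^2+\tfrac{3\ell^3}{8}\le \tfrac{3\ell n}{10}+\tfrac{\ell n}{16}=\tfrac{29\ell n}{80}$, using $k\ell\le \tfrac{n}{30}$ and $\ell^2\le \tfrac{n}{6}$ simultaneously. Since $\ell+\tfrac{149}{120}>\tfrac{29\ell}{80}$ for all $\ell\ge 2$, the required inequality holds outright.
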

    \begin{proof}
        First, we improve the lower bound $x_i > \frac{2}{3\ell-1}$ for $i\in [\ell]$. 
        Notice that $|G[U]| \le \sum_{1\le i < j \le \ell} x_ix_j (n-z)^2$. 
        So, it follows from~\eqref{equ:GU-size-clique} and the following inequality (which follows from the Maclaurin's inequality, see~{\cite[Lemma~2.2]{LMR23unif}}) 
        \begin{align*}
            \sum_{1\le i < j \le \ell} x_ix_j
            \le \frac{\ell-1}{2\ell} - \frac{1}{2}\sum_{i\in[\ell]}\left(x_i - \frac{1}{\ell}\right)^2
        \end{align*}
        that $\frac{1}{2}\sum_{i\in[\ell]}\left(x_i - \frac{1}{\ell}\right)^2
            \le \frac{k'}{(n-z)^2}
            \le \frac{k + \ell/8}{(n/2)^2}$. 
        It follows from $n \ge \max\{6\ell^2, 30 k \ell\}$ and $\ell \ge 2$ that 
        \begin{align*}
            x_i 
            \ge \frac{1}{\ell}
            -\left(\frac{8k + \ell}{n^2}\right)^{1/2}
            \ge \frac{1}{\ell} - \left(\frac{8}{30^2k}+\frac{1}{6^2\ell}\right)^{1/2}\cdot\frac{1}{\ell}
            \ge \frac{5}{6\ell}
            \quad\text{for all}\quad i\in [\ell]. 
        \end{align*}
        Suppose that this claim is not true, and by symmetry, we may assume that $\{v,v'\}\in G[Z_1]$ is an edge. 
        It follows from the $K_{\ell+1}$-freeness of $G$ that there exists $2\le i_{\ast} \le \ell$ such that $U_{i_{\ast}}' \cap N_{G}(v) \cap N_{G}(v') = \emptyset$. 
        By the Pigeonhole Principle, we may assume that at least half vertices in $U_{i_{\ast}}'$ are not adjacent to $v$. 
        This implies that 
        \begin{align*}
            d_{G}(v) 
            \le n - |U_1| - \frac{1}{2}|U_{i_{\ast}}'|
            & \le n - \frac{5}{6\ell}(n-z) - \frac{1}{2}\left(\frac{5}{6\ell}(n-z) - 2k'\right) \\
            & \le \frac{\ell-1}{\ell}n - k - 1 -\left(\frac{n}{4\ell} - k' -k - \frac{5z}{4\ell}-1\right).
        \end{align*}
        Since $k \le \frac{n}{30\ell}$, $\ell \le \frac{n}{6\ell}$,  $k' \le k + \ell/8 \le \frac{13 n}{240\ell}$ (all due to $n \ge \max\{6\ell^2, 30 k \ell\}$), and $z \le \frac{13 n}{120 \ell}$ (by Claim~\ref{CLAIM:size-Z}), we have $\frac{n}{4\ell} - k' -k - \frac{5z}{4\ell}-1 \le 0$. 
        Therefore, $d_{G}(v) \le \frac{\ell-1}{\ell}n - k - 1$, a contradiction. 
    \end{proof}
    Let $V_i \coloneq U_i \cup Z_i$ for $i\in [\ell]$. 
    It follows from Claim~\ref{CLAIM:Klique-Zi-empty} that $\bigcup_{i\in [\ell]}G[V_i] = \emptyset$, proving the correctness of Algorithm~\ref{ALGO:K-freeness}. 
\end{proof}


Now we present the proof of Theorem~\ref{THM:Embed-avg-Klique-lower-bound}. 
\begin{proof}[Proof of Theorem~\ref{THM:Embed-avg-Klique-lower-bound}]
    Fix $\delta>0$ (we may assume that $C \coloneq 1/\delta$ is an integer) and suppose to the contrary that there exists an algorithm $\mathcal{A}$ that solves $\textsc{Embed}_{\mathrm{avg}}$-$(K_{\ell+1},n,n^{1+\delta})$ in time $n^{o(\ell)}$.
    We claim that $\mathcal{A}$ can also solves $\textsc{Embed}$-$K_{\ell+1}$ in time $n^{o(\ell)}$, which would contradict the result by Chen--Huang--Kanj--Xia~\cite{CHKX06}. 

    The construction is very similar to that in the proof of Theorem~\ref{THM:Embed-min-Klique}. 
    Consider an arbitrary $n$-vertex graph $G$. 
    Let $\hat{G}$ be the graph obtained from $G$ by adding $\ell$ sets $V_1, \ldots, V_{\ell}$, each of size $n^{C}$, and adding new edges $\{u,v\}$ for all $(u,v) \in V_i \times V_j$ whenever $1 \le i < j \le \ell$.
    Let $N \coloneqq \ell n^{C}+ n$ denote the number of vertices in $\hat{G}$. 
    Observe that $K_{\ell+1} \subset G$ iff $K_{\ell+1} \subset \hat{G}$. 
    Since 
    \begin{align*}
        |\hat{G}|
        = \binom{\ell}{2}n^{2C} + |G| 
        \ge \frac{\ell-1}{\ell}\frac{(N-n)^2}{2}
        \ge \frac{\ell-1}{\ell}\frac{N^2}{2} - Nn
        > \mathrm{ex}(N,K_{\ell+1}) - N^{1+\delta}, 
    \end{align*}
    it follows from oue assumption that algorithm $\mathcal{A}$ can decide in time $N^{o(\ell)} = \left(\ell n^{C}+ n\right)^{o\left(\ell\right)} =  n^{o(\ell)}$ whether $K_{\ell+1} \subset \hat{G}$, and equivalently, whether $K_{\ell+1} \subset G$, proving our claim in the first paragraph.  
\end{proof}

\section{Concluding remarks}
Recall that the core of the algorithm for $\textsc{Embed}_{\mathrm{min}}$-$(K_{\ell+1},\alpha)$ when $\alpha > \frac{3\ell-4}{3\ell-1}$ is the structural theorem by Andr\'{a}sfai--Erd{\H o}s--S\'{o}s~\cite{AES74}. 
It seems worth exploring whether refined structural theorems (see e.g.~\cite{CJK97,GL11,ABGKM13,OS20}) can be used to push the lower bound for $\alpha$ further. 


The proof of Theorem~\ref{THM:Embed-min-F} can be easily modified to cover some hypergraph families with multiple extremal constructions (see e.g.~\cite{LM22,LMR23a}). 
There are hypergraph Tur\'{a}n problems whose structure of extremal constructions can exhibit a nonminal pattern (see e.g.~\cite{HLLMZ22}), a recursive pattern (see, e.g.~\cite{PI14}), or even a mixed recursive pattern (see, e.g.~\cite{LP22}). 
It is of interest to investigate whether Theorems~\ref{THM:Embed-min-F} and~\ref{THM:hypergraph-homomorphism} can be extended to cover nonminal/recursive/mixed recursive patterns.

Recall the nice structural characterization of minimal graphs: a graph is minimal iff it is complete. 
It would be interesting to explore a characterization of rigid graphs. 
Simple linear algebra arguments show that non-singular (i.e. the adjacency matrix is full rank) regular graphs are rigid.
We refer the reader to~\cite{Sci07} for related results on singular graphs. 
In general, one could ask for a characterization of the families of all minimal/rigid $r$-graphs.
\section*{Acknowledgement}
Theorem~\ref{THM:edge-stable-to-degree-stable} was previously presented by XL in a 2023 summer school at Suzhou University, and we would like to thank the organizers for their warm hospitality. 
XL would also like to thank Dhruv Mubayi and Christian Reiher for very inspiring discussions in a previous related project, and to Dhruv Mubayi for pointing out the application of degree-stability in spectral Tur\'{a}n problem. 
We also thank Yixiao Zhang for verifying the vertex-extendability of some hypergraphs in Table~\ref{tab:degree-stable.}. 
\bibliographystyle{abbrv}
\bibliography{FindingHAlgorithm}
\begin{appendix}
\section*{Definitions for hypergraphs in Table~\ref{tab:degree-stable.}}
\begin{itemize}
    \item A graph $F$ is \textbf{edge-critical} if there exists an edge $e\in F$ such that $\chi(F-e) < \chi(F)$. 
    \item Fix a graph $F$,  
        the \textbf{expansion} $H_{F}^{r}$ of $F$ is the $r$-graphs obtained from $F$ by adding a set of $r-2$ new vertices into each edge of $F$, and moreover, these new $(r-2)$-sets are pairwise disjoint. 
    \item Given an $r$-graph $F$ with $\ell+1$ vertices, 
        the \textbf{expansion} $H^{F}_{\ell+1}$ of $F$ is the $r$-graph obtained from $F$ by adding, for every pair $\{u,v\}\subset V(F)$ that is not contained in any edge of $F$, an $(r-2)$-set of new vertices, and moreover, these $(r-2)$-sets are pairwise disjoint. 
    \item We say a tree $T$ is an \textbf{Erd\H{o}s--S\'{o}s tree} if it satisfies the famous Erd\H{o}s--S\'{o}s conjecture on trees. 
    The \textbf{$(r-2)$-extension} $\mathrm{Ext}(T)$ of a tree $T$ is  
    \begin{align*}
        \mathrm{Ext}(T) := \left\{e\cup A \colon e \in T\right\}, 
    \end{align*}
    where $A$ is a set of $r-2$ new vertices that is disjoint from $V(T)$.   
    An $r$-graph $F$ is an \textbf{extended tree} if $F = \mathrm{Ext}(T)$ for some tree. 
    \item The ($r$-uniform) \textbf{generalized triangle} $\mathbb{T}_r$ is the $r$-graph with vertex set $[2r-1]$ and edge set  
    \begin{align*}
        \left\{\{1,\ldots,r-1, r\}, \{1,\ldots, r-1, r+1\}, \{r,r+1, \ldots, 2r-1\}\right\}. 
    \end{align*}
    \item Let $\mathcal{C}^{2r}_{3}$ (the expanded triangle) denote the $2r$-graph with vertex set $[3r]$ and edge set 
    \begin{align*}
        \left\{\{1,\ldots, r, r+1, \ldots, 2r\}, \{r+1, \ldots, 2r, 2r+1, \ldots, 3r\}, \{1,\ldots, r, 2r+1, \ldots, 3r\}\right\}. 
    \end{align*}
     \item The \textbf{Fano plane} $\mathbb{F}$ is the $3$-graph with vertex set $\{1,2,3,4,5,6,7\}$ and edge set
    \begin{align*}
        \{123,345,561,174,275,376,246\}. 
    \end{align*}
    \item Let $F_7$ ($4$-book with $3$-pages) denote the $3$-graph with vertex set $\{1,2,3,4,5,6,7\}$ and edge set 
    \begin{align*}
        \left\{1234, 1235, 1236, 1237, 4567\right\}. 
    \end{align*}
    \item Let $\mathbb{F}_{4,3}$ denote the $4$-graph with vertex set $\{1,2,3,4,5,6,7\}$ and edge set
    \begin{align*}
        \left\{1234, 1235, 1236, 1237, 4567\right\}. 
    \end{align*}
    \item Let $\mathbb{F}_{3,2}$ denote the $3$-graph with vertex set $\{1,2,3,4,5\}$ and edge set
    \begin{align*}
        \{123,124,125,345\}. 
    \end{align*}
    \item The $3$-graph $K_{4}^{3}\sqcup K_{3}^{3}$ has vertex set $\{1,2,3,4,5,6,7\}$ and edge set 
    \begin{align*}
        \{123,124,234,567\}. 
    \end{align*}
    \item The $r$-graph $M_{k}^{r}$ ($r$-uniform $k$-matching) is the $r$-graph consisting of $k$ pairwise disjoint edges. 
    \item The $r$-graph $L_{k}^{r}$ ($r$-uniform $k$-sunflower) is the $r$-graph consisting of $k$ edges $e_1, \ldots, e_k$ such that for all $1 \leq i < j \leq k$, it holds that $e_i \cap e_j = \{v\}$ for some fixed vertex $v$.
\end{itemize}
\end{appendix}
\end{document}